    \newcommand\xrowht[2][0]{\addstackgap[0.5\dimexpr#2\relax]{\vphantom{#1}}}
  \newcolumntype{P}[1]{>{\centering\arraybackslash}p{#1}}
    \newcolumntype{M}[1]{>{\centering\arraybackslash}m{#1}}
\title{The exact dimension of Liouville numbers: The Fourier side}
\author{Iván Polasek}
\address[Iván Polasek]{Department of Mathematics,
University of Buenos Aires and IMAS-CONICET-UBA}
\email{ipolasek@dm.uba.ar}
\author{Ezequiel Rela}
\address[Ezequiel Rela]{Department of Mathematics,
University of Buenos Aires and IMAS-CONICET-UBA}
\email{erela@dm.uba.ar}
\address[Ezequiel Rela]{Guangdong Technion Israel Institute of Technology, Shantou, China. } 
\email{ezequiel.rela@gtiit.edu.cn}
\keywords{Liouville set, Fourier decay, Rajchman measures.}
\subjclass{Primary 42A38, 26A12}
\newtheorem{theorem}{Theorem}[section]
\newtheorem{lemma}[theorem]{Lemma}
\newtheorem{proposition}[theorem]{Proposition}
\newtheorem{corollary}[theorem]{Corollary}
\theoremstyle{definition}
\newtheorem{definition}[theorem]{Definition}
\newtheorem*{remark}{Remark}
\newenvironment{proof 1}{\paragraph{Proof:}}{\hfill$\square$}
\begin{document}
\begin{abstract}
    In this article we study the generalized Fourier dimension of the set of Liouville numbers $\mathbb{L}$. Being a set of zero Hausdorff dimension, the analysis has to be done at the level of functions with a  slow decay at infinity acting as control for the Fourier transform of (Rajchman) measures supported on $\mathbb{L}$. We give an almost complete characterization of admissible decays for this set in terms of comparison to power-like functions. This work can be seen as the ``Fourier side'' of the analysis made by Olsen and Renfro regarding the generalized Hausdorff dimension using gauge functions. We also provide an approach to deal with the problem of classifying oscillating candidates for a Fourier decay for $\mathbb{L}$ relying on its translation invariance property.
\end{abstract}
\maketitle

\section{Introduction and Main Results}\label{sec:intro}

The study of the Fourier decay properties of measures supported on sets of fractal dimension is a central problem in analysis exploiting the interplay between  harmonic analysis and geometric measure theory. The decay rate of $\hat\mu$ for a fractal measure has a fundamental role in restriction theorems for the Fourier transform, after the groundbreaking work of \cite{moc00}. A classical problem related to the Fourier decay is the famous uniqueness problem for trigonometric series (interesting and new results for the case of self-similar sets can be found in \cite{Li-Sahlsten}, \cite{VY}; a mandatory reference on the uniqueness problem is the work of Kahane and Salem \cite{ks63}). 

Additionally, it is relevant for the study of the existence of normal numbers on fractal sets as can be seen in the recent works \cite{FW-2024},\cite{FHR-2024}.

The optimal power-like decay is used to define the Fourier dimension of a set $E \subset \mathbb{R}^n$ as
\begin{equation}\label{eq:dimF}
    \dim_F(E)=\sup\left\{\beta\in[0,n]:\exists \mu\in\mathcal{P}(E): |\hat\mu(\xi)|\lesssim(1+|\xi|)^\frac{-\beta}{2}\right\}.
\end{equation}

It is a deep and important problem to detect, for a given set $E\subset \mathbb{R}^n$, the optimal decay rate for a probability measure $\mu \in\mathcal{P}(E)$. There is a well known constraint on the range of decay rates given by the Hausdorff dimension ($\dim$) of the support of $\mu$, namely
\begin{equation}\label{eq:dimF le DimH}
    \dim_F(E)\le \dim(E).
\end{equation}
In the case when equality holds for a set $E$, it is called a Salem set (see 
 for example \cite{blu98}, \cite{FraHam}  for more details on this).
This means that in particular for sets of zero Hausdorff dimension there cannot be a power-like decay for any measure supported on it. There are, however, examples of zero dimensional sets supporting Rajchman measures, i.e., measures $\mu$ such that $\hat\mu\to 0$ at infinity. 

A relevant case to our purpose on this article is the case of Liouville numbers defined as follows: 
\begin{equation}\label{def:Liouville}
    \mathbb{L} = \{ x \in \mathbb{R}\setminus \mathbb{Q}: \forall n \in \mathbb{N},  \exists \ q \in \mathbb{N} \ \text{such that } \| qx \| < q^{-n} \},
\end{equation}
where $ \| x \| = \min_{m \in \mathbb{Z}} |x-m|$.
In \cite{blu00} a specific construction of a measure supported on $\mathbb{L}$ vanishing at infinity is shown. 

The main purpose of the present article is to address the problem of detecting if a given (slowly enough) decay rate is a plausible candidate as a control for the Fourier transform of a measure supported on $\mathbb{L}$. In that direction, \eqref{eq:dimF le DimH} looks inefficient, since $\dim(\mathbb{L})=0$ and therefore nothing can be obtained from there. For the particular case of zero dimensional sets (and actually also for any dimension in general) it is useful to consider the finer notion of size defined in terms of dimension functions $h:[0,+\infty)\to [0,+\infty)$ and their corresponding  Hausdorff measures $\mathcal{H}^h$:
\begin{equation}\label{def:Hausdorff-h}
   \mathcal{H}^h(E):= \lim_{\delta \rightarrow 0} \inf \left\{ \sum h(\textnormal{diam}(E_i)): E_i \ \textnormal{open}, \bigcup E_i \supset E, \textnormal{diam}(E_i) \le \delta \right\}.
\end{equation}

We can use this to refine the notion of size and try to detect the exact dimension function that is accurate to measure a given set. In particular, we say that $E$ is an $h$-set if $0<\mathcal{H}^h(E)<\infty$. It is not always possible to find a dimension function for which this holds. In this case, a common situation is that a ``dimensional partition'' is provided. That is, for a given set $E$ and a reasonable family of dimension functions, it is possible to provide a cut-point which allows to discriminate between those functions such that $\mathcal{H}^h(E)=0$ from those such that $\mathcal{H}^h(E)=+\infty$. This is precisely the case proved by Olsen and Renfro in \cite{ols05}\cite{or06} for the Liouville numbers, where they proved the following result.

\begin{theorem}[Olsen \& Renfro]\label{thm:Olsen-Renfro}
   Let $h$ be an arbitrary dimension function. Define the function $\Gamma_h$ by
   \begin{equation*}
       \Gamma_h (r) = \inf_{0<s\le r} r \frac{h(s)}{s}.
   \end{equation*}
   Then
   \begin{enumerate}
       \item If $\displaystyle \limsup_{r \searrow 0} \frac{\Gamma_h(r)}{r^t} = 0$ for some $t>0$, then $\mathcal{H}^{h}(\mathbb{L})=0$.
       \item If $\displaystyle \limsup_{r \searrow 0} \frac{\Gamma_h (r)}{r^t} >0$ for every $t>0$, then $\mathcal{H}^h$ is not $\sigma$-finite on $\mathbb{L}$.
   \end{enumerate}
\end{theorem}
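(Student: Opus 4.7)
The plan is to handle the two parts separately: part (1) is a direct covering argument, while part (2) requires constructing a measure on $\mathbb{L}$ with appropriate local behavior and then invoking translation invariance.

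For part (1), the starting point is the covering of $\mathbb{L}$ built into its very definition: for every $n \in \mathbb{N}$ and every $q_0 \ge 2$,
\[
\mathbb{L}\cap[0,1] \subset \bigcup_{q \ge q_0}\ \bigcup_{0 \le p \le q} B\!\left(\tfrac{p}{q},\, q^{-n}\right).
\]
Rather than using these balls directly, I would refine each one exploiting $\Gamma_h$: an interval of length $r$ can be covered by about $r/s$ intervals of length $s \le r$, with total $h$-cost essentially $r\,h(s)/s$, and optimizing over $s$ this becomes $\Gamma_h(r)$. Applied to the balls above with $r = 2q^{-n}$, and using the hypothesis $\Gamma_h(r) \le \varepsilon r^t$ for $r$ small, the total cost of the refined cover is bounded by
\[
\sum_{q \ge q_0} (q+1)\,\Gamma_h(2q^{-n}) \;\lesssim\; \varepsilon \sum_{q \ge q_0} q \cdot q^{-nt}.
\]
Choosing $n$ large enough that $nt > 2$ makes this sum convergent; sending $q_0 \to \infty$ then drives both the mesh of the cover and the total $h$-cost to zero, so $\mathcal{H}^h(\mathbb{L}) = 0$.

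For part (2), I would proceed in two moves. First, build a Cantor-like subset $K \subset \mathbb{L}$ and a probability measure $\mu$ supported on it obeying a Frostman-type bound $\mu(B(x,r)) \lesssim h(r)$; the construction nests intervals around very rapidly approximable rationals $p_k/q_k$, with widths tuned to $h$. The hypothesis that $\Gamma_h(r)/r^t$ does not vanish for any $t > 0$ is the precise non-degeneracy condition ensuring that $h$ does not decay too fast and that enough mass can be distributed at each generation. By the mass distribution principle this yields $\mathcal{H}^h(K) > 0$. To promote this to non-$\sigma$-finiteness, I would use that $\mathbb{L}$ is invariant under rational translations and $\mathcal{H}^h$ is translation invariant on $\mathbb{R}$: the translates $K + q$, $q \in \mathbb{Q}$, remain inside $\mathbb{L}$ and all have the same positive measure. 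A pigeonhole argument against any putative decomposition $\mathbb{L} = \bigcup_i E_i$ with $\mathcal{H}^h(E_i) < \infty$, combined with a density argument exploiting that $\mathbb{Q}$ is dense in $\mathbb{R}$, forces a contradiction.

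The main obstacle is the Cantor construction in part (2): the denominators $q_k$ and the widths of the nested intervals must be finely coordinated through $\Gamma_h$ so that simultaneously every point of $K$ satisfies the Liouville condition and the Frostman bound $\mu(B(x,r)) \lesssim h(r)$ holds at all scales. This is where the definition $\Gamma_h(r) = \inf_{0<s\le r} r\, h(s)/s$ plays a double role, matching exactly the covering argument of part (1) and producing the sharp dichotomy stated in the theorem.
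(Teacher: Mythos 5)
The paper does not actually prove this statement: Theorem \ref{thm:Olsen-Renfro} is quoted as background from Olsen \cite{ols05} and Olsen--Renfro \cite{or06}, so there is no internal proof to compare yours against. Judged on its own terms, your part (1) is the standard and essentially correct argument: refine each ball $B(p/q,q^{-n})$ of the natural cover into intervals of near-optimal length $s$, pay roughly $\Gamma_h(2q^{-n})$ per ball, choose $n$ with $nt>2$ so that $\sum_q q\cdot q^{-nt}$ converges, and let $q_0\to\infty$ to kill both the mesh and the total $h$-cost (adding the routine remark that $\mathbb{L}$ is covered by the countably many pieces $\mathbb{L}\cap[m,m+1]$).

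Part (2), however, has two genuine gaps. First, the hypothesis is only a $\limsup$ condition: for each $t>0$ it gives a sequence $r_j\searrow 0$ along which $\Gamma_h(r_j)\ge c_t r_j^t$, but no control of $h$ at all scales. A Frostman bound $\mu(B(x,r))\lesssim h(r)$ valid for every small $r$ is exactly the kind of all-scales statement this hypothesis does not obviously supply, and the coordination of the denominators $q_k$ and interval widths with $h$ ``through $\Gamma_h$'' is precisely the technical core that you leave unspecified; this is where the real work of \cite{ols05} lives. Second, and more fundamentally, the passage from $\mathcal{H}^h(K)>0$ to non-$\sigma$-finiteness does not close as written. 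A $\sigma$-finite measure space can contain uncountably many translates of a fixed set of positive measure (Lebesgue measure on $\mathbb{R}$ and the translates of $[0,1]$ already show this), so pigeonholing the family $\{K+q\}_{q\in\mathbb{Q}}$ against a decomposition $\mathbb{L}=\bigcup_i E_i$ with $\mathcal{H}^h(E_i)<\infty$ yields no contradiction: the positive-measure pieces landing in a single $E_{i_0}$ need not have measures bounded below, and nothing forces their union to exceed $\mathcal{H}^h(E_{i_0})$. What is actually needed is an ingredient of the strength of the incommensurability result the paper cites as \cite{ek06} (no translation-invariant Borel measure assigns $\mathbb{L}$ a positive $\sigma$-finite value), or equivalently a direct proof that every subset of $\mathbb{L}$ of finite $\mathcal{H}^h$-measure is $\mathcal{H}^h$-null; combined with $\mathcal{H}^h(\mathbb{L})>0$ this gives (2). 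As it stands, your argument establishes at most $\mathcal{H}^h(\mathbb{L})>0$.
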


Elekes and M\'ath\'e \cite{ek06} went further on this and they proved that this 0/$\infty$ law is not special to Hausdorff measures, but it is an incommensurability property of Liouville numbers: no translation invariant Borel measure will assign a positive and $\sigma$-finite value to the set of Lioville numbers.

In the same spirit as in Theorem \ref{thm:Olsen-Renfro}, we address the dual problem of finding the exact dimension of Liouville numbers on the Fourier side. Inspired by the work of Olsen and Renfro and also by the construction of Bluhm in \cite{blu00}, we provide a description of the allowed Fourier decay for a measure supported on the Liouville set. 

To state our results, we introduce the following Fourier set associated to a given measurable set $E\subset \mathbb{R}$. We will adopt the standard terminology and notation and say that $f$ is ``smaller'' than $g$  if $f = O(g)$ at infinity, which means that  there exist $M>0, c>0$ such that $|f(|x|)| \le c |g(|x|)|$ when $|x| \ge M$.

\begin{definition}\label{def:FourierSet-EquivClasses}
    Let $E \subset \mathbb{R}$. Define 
    \begin{equation}
    \tilde{\mathcal{F}}(E) = \left\{ f: \mathbb{R}_{\ge0}\to \mathbb{R}_{\ge0} / \ \exists\ \mu \in  
\mathcal{P}(E) \ \textnormal{and} \ \hat{\mu}= O(f) \ \right\}.
\end{equation}
\end{definition}

\begin{definition}\label{def:fourier-set}
    Let $E \subset \mathbb{R}$. Define its Fourier set as
    \begin{equation*}
    \mathcal{F}(E) = \frac{\tilde{\mathcal{F}}(E)}{\sim},
\end{equation*}
where the equivalence relation $\sim$ between elements of $\tilde{\mathcal{F}}(E)$ is defined as
\begin{align*}
    f \sim g \iff  f= O(g) \ \textnormal{and} \ g=O(f).
\end{align*}
In this case we will say that $f$ and $g$ are \textit{asymptotically close}.
\end{definition}

The benefit of working with $\mathcal{F}(E)$ is that its elements, the equivalence classes $[f]$ of functions $f$ under the relation $\sim$, codify information only about the decay of $f$ in $+\infty$ without tying themselves to any specific values of the function. This is useful because it is only the decay of a function $f$ which determines if it belongs to $\tilde{\mathcal{F}}(E)$ (this is obvious from the definition). For example, we might say that $\left[\frac{1}{\log \log (\xi)}\right] \in \mathcal{F}(E)$ even if the function is not defined for every $\xi \in \mathbb{R}^+_0$, by thinking of $\left[\frac{1}{\log \log (\xi)}\right]$ as the class $[f]$ with representative
\begin{equation*}
    f(\xi)= \left\{ \begin{array}{ccc}
    1 & \textnormal{if} & 0 \le \xi \le e^e \\
    \\
    \frac{1}{\log \log (\xi)} & \textnormal{if} & \xi>e^e.
    \end{array}
    \right.
\end{equation*}

Defining $\mathcal{F}(E)$ in this way will be useful in the long run, because it avoids carrying unnecessary information. It is, however, a bit tedious, so we will sometimes identify an element $[f] \in \mathcal{F}(E)$ with some convenient representative $f$. It is important to notice that, since $\sim$ relates functions which behave similarly enough in infinity, proving that a class $[f]$ belongs to $\mathcal{F}(E)$ is achieved simply by proving that one representative $f$ belongs to $\tilde{\mathcal{F}}(E)$.

\subsection{The Fourier set of \texorpdfstring{$\mathbb{L}$}{L}}\label{sec:fourier-set}
We now present our first main result, concerning the Fourier set of Liouville numbers $\mathbb{L}$, namely $\mathcal{F}(\mathbb{L})$. Bluhm proved in \cite{blu00} that $\mathbb{L}$ supports a Rajchman measure, which in our terminology is saying that there exists some function in $\mathcal{F}(\mathbb{L})$ that tends to $0$ at infinity. By the elementary dimension estimate \eqref{eq:dimF le DimH} it is clear that only slowly decaying functions  may belong to $\mathcal{F}(\mathbb{L})$. By this we mean slower than any power-like decay; perhaps the first and most intuitive example of such functions are $\frac{1}{\log^{p}\xi}$ for $p>0$.
The purpose of the following theorem is to provide a Fourier dimensional partition of the set of decreasing or continuous functions according to their membership in $\mathcal{F}(\mathbb{L})$. In particular, and roughly speaking, one would like to determine what is the fastest possible Fourier decay allowed for the Liouville numbers (among all slowly decaying functions, of course). The first main result in this article is the following.

\begin{theorem}\label{thm:fourier-set-liouville-decreasing}
    Let $f: \mathbb{R}_{\ge 0}\rightarrow {\mathbb{R}_{\ge 0}}$ be a bounded function that is either decreasing or continuous. Then 
    \begin{enumerate}[(i)]
        \item If 
           \begin{equation}\label{eq:decay-condition}
    \limsup_{\xi \rightarrow +\infty} \frac{\xi^{-\alpha}}{f(\xi)}=0 \ \forall \ \alpha>0,
\end{equation}
then $f \in \mathcal{F}(\mathbb{L})$.
\item If 
\begin{equation}\label{eq:decay-condition-2}
    \liminf_{\xi \rightarrow +\infty} \frac{\xi^{-\alpha_0}}{f(\xi)}>0 \ \textnormal{for some} \ \alpha_0>0,
\end{equation}
then $f \notin \mathcal{F}(\mathbb{L})$.
    \end{enumerate}

\end{theorem}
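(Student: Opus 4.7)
The plan is to treat the two parts separately. Part (ii) is essentially a packaging of the classical bound $\dim_F \le \dim$, while part (i) requires constructing, for each admissible $f$, a Rajchman measure on $\mathbb{L}$ whose Fourier transform is controlled by $f$.

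For part (ii), if $f$ satisfies \eqref{eq:decay-condition-2} then $f(\xi)\le C\,\xi^{-\alpha_0}$ for all sufficiently large $\xi$. Any $\mu\in\mathcal{P}(\mathbb{L})$ with $\hat\mu=O(f)$ would therefore satisfy $|\hat\mu(\xi)|\lesssim(1+|\xi|)^{-\alpha_0}$. By definition \eqref{eq:dimF} this would give $\dim_F(\mathbb{L})\ge 2\alpha_0>0$, contradicting \eqref{eq:dimF le DimH} together with the well known fact that $\dim(\mathbb{L})=0$.

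For part (i), my plan is to adapt Bluhm's Cantor-type construction from \cite{blu00}. One picks a rapidly increasing integer sequence $(a_n)_{n\ge 1}$ and, at stage $n$, keeps inside each surviving interval an $a_n^{-n}$-neighborhood of every rational with denominator $a_n$. By \eqref{def:Liouville}, the intersection $C=\bigcap_n C_n$ sits inside $\mathbb{L}$ as soon as $(a_n)$ grows fast enough, and Bluhm equips $C$ with a natural self-similar probability measure $\mu$. The key feature exploited is that $\hat\mu$ factors, up to a controlled bump factor coming from the chosen profile inside each interval, as an infinite product of Dirichlet-type kernels at scales $a_n$. For a frequency $|\xi|\sim a_n$, the $n$-th factor dominates and yields a polynomial decay in $a_n$ of some fixed order $\beta>0$, while the later factors are close to $1$ and the earlier factors contribute only bounded oscillation.

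The central point is that the growth of $(a_n)$ can be tuned to $f$. Because \eqref{eq:decay-condition} says that $f$ decays slower than \emph{every} power $\xi^{-\alpha}$, one has complete freedom to choose $a_n$ recursively so large that the bare polynomial estimate $a_n^{-\beta}$ produced by the construction lies below $f(a_n)$, while still satisfying the growth needed for $C\subset\mathbb{L}$ and for the product of ``tail'' factors to remain uniformly bounded. This yields $|\hat\mu(\xi)|\le C\,f(a_n)$ whenever $|\xi|\sim a_n$.

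The main obstacle is the upgrade from this discrete-scale bound to a uniform bound $|\hat\mu(\xi)|\le C\,f(|\xi|)$ for all large $\xi$, which is where the hypothesis that $f$ is either decreasing or continuous is used. Between two consecutive scales $a_n\le |\xi|\le a_{n+1}$, monotonicity (or, in the continuous case, a comparison argument comparing $f(|\xi|)$ to its values at the endpoints via \eqref{eq:decay-condition}) gives only a bounded distortion between $f(|\xi|)$ and $f(a_n)$, so the estimate transfers with only a multiplicative constant. A secondary delicate point is the careful case analysis of the product $\prod_n D_n(\xi)$ depending on where $|\xi|$ sits with respect to the scales $a_n$; this is the technical heart of the argument and replaces the one-line use of \eqref{eq:dimF le DimH} that sufficed in part (ii).
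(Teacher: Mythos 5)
Your part (ii) is exactly the paper's argument and is correct. For part (i) you follow the same broad strategy as the paper (a Bluhm-type Cantor construction whose scales are tuned to $f$), but there is a genuine gap in the step where you upgrade the discrete-scale bound $|\hat\mu(\xi)|\le C\,f(a_n)$ for $|\xi|\sim a_n$ to the uniform bound $|\hat\mu(\xi)|\le C\,f(|\xi|)$. You claim that monotonicity (or continuity) gives ``bounded distortion'' of $f$ between consecutive scales. This fails: for the Cantor set to land inside $\mathbb{L}$ as in \eqref{def:Liouville} the scales must grow super-polynomially (roughly $a_{n+1}\ge a_n^{\,n}$), and then for any admissible $f$ --- already for $f(\xi)=1/\log\xi$ --- the ratio $f(a_{n+1})/f(a_n)$ tends to $0$, so there is no bounded distortion. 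Worse, the comparison points the wrong way for a decreasing $f$: on $[a_n,a_{n+1}]$ one has $f(\xi)\ge f(a_{n+1})$, so to conclude $|\hat\mu(\xi)|\lesssim f(\xi)$ there you must beat $f(a_{n+1})$, not $f(a_n)$. A secondary inaccuracy is that the polynomial gain produced by the $n$-th generation of such constructions has an exponent that degrades with $n$ (in Bluhm's lemma the control is $(1+|\xi|)^{-1/(2+k)}$ times logarithmic factors), not a fixed $\beta>0$; this happens to be harmless only because \eqref{eq:decay-condition} beats every power, but it should be acknowledged.

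The paper sidesteps the interpolation problem entirely by making the comparison with $f$ pointwise in $\xi$ rather than at discrete scales: at stage $k$ it invokes Bluhm's perturbation lemma with tolerance $\delta_k=2^{-k}\rho_k$, where $\rho_k=\bigl(\max_{\xi}\theta_k(\xi)/f(|\xi|)\bigr)^{-1}$ is finite precisely because of \eqref{eq:decay-condition} together with the monotonicity/continuity hypothesis (this is where that hypothesis is actually used). Each increment then satisfies $|\widehat{\mu_k}(\xi)-\widehat{\mu_{k-1}}(\xi)|\le 2^{-k}\rho_k\theta_k(\xi)\le 2^{-k}f(|\xi|)$ for \emph{every} $\xi$, and the telescoping sum gives the bound directly, with no passage between scales. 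To repair your version you would need to replace the endpoint value $a_n^{-\beta}$ by a pointwise estimate of the $n$-th Dirichlet-type factor valid on all of $[a_n,a_{n+1}]$ and compare that function of $\xi$ to $f(\xi)$ --- which essentially reconstructs the paper's $\rho_k$ device. You should also address why the limit measure has positive mass and assigns zero mass to $\mathbb{Q}$ (the paper uses that Rajchman measures have no atoms), since the Cantor set you build a priori contains rationals.
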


\begin{remark}
We note here that since the universe of functions that are allowed in Theorem \ref{thm:fourier-set-liouville-decreasing} are either continuous or decreasing, we may say something more about the possible zeroes of those functions. One trivial observation is that for decreasing functions, no zeroes are allowed, since in that case the function will become identically zero from some point onwards and therefore obviously does not belong in $\mathcal{F}(\mathbb{L})$. On the other hand, for continuous functions that are not decreasing, we note that the set of zeroes has to be bounded for condition \eqref{eq:decay-condition} to hold. 
\end{remark}

This result provides a large class of admissible Fourier decays for $\mathbb{L}$. Recall that Bluhm constructed a measure $\mu_\infty$ with a good decay given by $|\widehat{\mu}_\infty (\xi)| \lesssim B(\xi)$, where $B$ is a control function given by a series expansion decaying to zero at infinity (see \eqref{eq:Bluhm-function}).
An easy computation shows, in particular, that the estimate
\begin{equation}\label{eq:Bluhm-estimate}
|\widehat{\mu}_\infty (\xi)| \lesssim \frac{1}{\log^p (|\xi|)}    
\end{equation}
holds for any $p>0$, providing a first example of a family of explicit functions in $\mathcal{F}(\mathbb{L})$.  As an interesting consequence, this implies that $\mathbb{L}$ contains numbers which are normal to every base (see \cite{PVZZ}). It remains, however, the question about the optimality of Bluhm's result.  A deeper question is about the optimal possible decay for $\mathbb{L}$ in general.
In that direction, the next corollary shows that 
our result allows to push any function satisfying \eqref{eq:decay-condition} towards the threshold for admissible decays by taking any power of it.
It follows from an easy but also important observation about the algebraic structure given by condition \eqref{eq:decay-condition}. 

\begin{remark}\label{rem:algebra-structure}
If a function $f$ satisfies condition \eqref{eq:decay-condition} then  $f^n$ also   satisfies condition \eqref{eq:decay-condition} for every $n \in \mathbb{N} $. 
\end{remark}

As a consequence, we obtain that Bluhm's result can be immediately improved.

\begin{corollary}\label{rem:Bluhm powers}
Let  $B$ be the function (vanishing at infinity) obtained by Bluhm in \cite{blu00} such that there is a Rajchman measure $\mu_\infty$ supported in $\mathbb{L}$ with $|\widehat{\mu_\infty}(\xi)| \lesssim B(\xi)$ (again, we refer to \eqref{eq:Bluhm-function}). Then $B^n\in \mathcal{F}(\mathbb{L})$ for every $n \in \mathbb{N} $.
\end{corollary}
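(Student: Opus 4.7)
The plan is to apply Theorem \ref{thm:fourier-set-liouville-decreasing} (i) to the function $f = B^n$, using Remark \ref{rem:algebra-structure} to transfer the decay condition from $B$ to each of its powers. First, I would check that $B^n$ lies in the admissible class of the theorem: since $B$ is bounded (by $1$, as a majorant of the Fourier transform of a probability measure) and enjoys the regularity (continuity or monotonicity) afforded by its series representation in \eqref{eq:Bluhm-function}, the same properties are inherited by $B^n$ for every $n \in \mathbb{N}$.

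The substantive step is to verify that $B$ itself satisfies the decay condition \eqref{eq:decay-condition}, i.e., that $\xi^\alpha B(\xi) \to \infty$ as $\xi \to \infty$ for every $\alpha>0$. I would read this off directly from the explicit series in \eqref{eq:Bluhm-function}: Bluhm's function is constructed precisely so as to vanish slower than any algebraic rate, consistently with the observation \eqref{eq:Bluhm-estimate} that its decay is of essentially logarithmic type. The argument should reduce to a straightforward lower-bound estimate on the tails of the defining series, comparing them against $\xi^{-\alpha}$ for an arbitrary $\alpha>0$.

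With \eqref{eq:decay-condition} established for $B$, Remark \ref{rem:algebra-structure} immediately yields the same condition for $B^n$, and Theorem \ref{thm:fourier-set-liouville-decreasing} (i) concludes that $B^n \in \mathcal{F}(\mathbb{L})$ for every $n$. The main (and essentially only) obstacle I anticipate is the explicit verification of \eqref{eq:decay-condition} for $B$ from its series form in \eqref{eq:Bluhm-function}; once this is in hand, the remainder of the argument is purely formal, being nothing more than a combined application of the theorem and the algebraic remark already available.
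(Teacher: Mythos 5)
Your proposal is correct and follows the same route the paper intends: the corollary is presented as an immediate consequence of Remark \ref{rem:algebra-structure} together with Theorem \ref{thm:fourier-set-liouville-decreasing}(i), exactly as you argue. The one step you defer---verifying \eqref{eq:decay-condition} for $B$---is indeed routine, but note it needs only a single term of the series rather than a tail estimate: for any $\alpha>0$ one has $B(\xi)\ge c_k\,\theta_k(\xi)\gtrsim_k (1+|\xi|)^{-1/(2+k)}$ for each fixed $k$, so choosing $k$ with $1/(2+k)<\alpha$ gives $\xi^{-\alpha}/B(\xi)\to 0$.
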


We now introduce two new auxiliary functions ($\tau$ and $\phi$ in the next theorem) that will prove to be important for understanding the behaviour of a \textbf{positive} decay $f$ . We will later prove a series of results that explore this in more detail.

\begin{theorem}\label{thm:equivalent-decay-conditions}
    Let $f: {\mathbb{R}_{\ge 0}} \longrightarrow {\mathbb{R}_{> 0}}$ be as be as in Theorem \ref{thm:fourier-set-liouville-decreasing}. Define the functions $\tau(\xi)$ and $\phi(\gamma)$ as
    \begin{equation*}
        \tau(\xi) = -\frac{\log f(\xi)}{\log \xi} \quad \textnormal{and} \quad \phi(\gamma) = -\log(f(e^\gamma)).
    \end{equation*}
    Then the following conditions:
    \begin{equation}\label{eq:tau-guarantees-inside}
        \limsup_{\xi \rightarrow + \infty} \tau(\xi) = 0
    \end{equation}
    and
    \begin{equation}\label{eq:phi-guarantees-inside}
        \limsup_{\gamma \rightarrow + \infty} \frac{\phi(\gamma)}{\gamma} = 0
    \end{equation}
    are both equivalent to \eqref{eq:decay-condition}, and as such they both imply $f \in \mathcal{F}(\mathbb{L})$.

    Similarly, the conditions
    \begin{equation}\label{eq:tau-guarantees-outside}
        \liminf_{\xi \rightarrow + \infty} \tau(\xi) > 0
    \end{equation}
    and
    \begin{equation}\label{eq:phi-guarantees-outside}
        \liminf_{\gamma \rightarrow + \infty} \frac{\phi(\gamma)}{\gamma} > 0
    \end{equation}
    are both equivalent to \eqref{eq:decay-condition-2}, and as such they both imply $f \notin \mathcal{F}(\mathbb{L})$.
\end{theorem}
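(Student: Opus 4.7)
The plan is to recognize the statement as two logarithmic reformulations of the hypotheses of Theorem \ref{thm:fourier-set-liouville-decreasing}, after which the membership/non-membership conclusions are automatic. The main ``obstacle'' is nothing deep; it is only a careful bookkeeping of logarithms and of the relationship between power-like decay and the slope $\tau$.

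First I would reduce the four equivalences to two by observing that $\tau$ and $\phi$ are really the same object in different coordinates. The substitution $\xi=e^\gamma$ gives
\[
\frac{\phi(\gamma)}{\gamma} \;=\; -\frac{\log f(e^\gamma)}{\gamma} \;=\; \tau(e^\gamma),
\]
and since $\gamma\mapsto e^\gamma$ is an order-preserving bijection of $[0,+\infty)$ onto $[1,+\infty)$ with $\gamma\to+\infty$ iff $\xi\to+\infty$, the $\limsup$ and $\liminf$ of $\phi(\gamma)/\gamma$ coincide with those of $\tau(\xi)$. This identifies \eqref{eq:tau-guarantees-inside} with \eqref{eq:phi-guarantees-inside} and \eqref{eq:tau-guarantees-outside} with \eqref{eq:phi-guarantees-outside}, so only two equivalences remain to be proved.

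The bridge from the algebraic statements \eqref{eq:decay-condition}, \eqref{eq:decay-condition-2} to the $\tau$-statements is the identity
\[
\frac{\xi^{-\alpha}}{f(\xi)} \;=\; \exp\!\bigl((\tau(\xi)-\alpha)\,\log\xi\bigr),
\]
valid for $\xi>1$ since $f>0$. For \eqref{eq:decay-condition}$\Rightarrow$\eqref{eq:tau-guarantees-inside}, fix $\alpha>0$ and apply the hypothesis with exponent $\alpha/2$: eventually $\xi^{-\alpha/2}/f(\xi)<1$, which forces $\tau(\xi)<\alpha/2$, and letting $\alpha\downarrow 0$ yields $\limsup_\xi \tau(\xi)\le 0$; the matching inequality $\limsup_\xi \tau(\xi)\ge 0$ comes for free from boundedness of $f$ (so $\log f\le C$ and $\tau(\xi)\ge -C/\log\xi\to 0$). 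For the converse, if $\limsup \tau=0$ then for any $\alpha>0$ we have eventually $\tau(\xi)<\alpha/2$, whence $\xi^{-\alpha}/f(\xi)<\xi^{-\alpha/2}\to 0$.

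The liminf pair is proved by a symmetric argument: if \eqref{eq:decay-condition-2} holds with parameter $\alpha_0$ then $f(\xi)\le C\,\xi^{-\alpha_0}$ eventually, so $\tau(\xi)\ge \alpha_0+\log C/\log\xi$ and $\liminf \tau\ge \alpha_0>0$; conversely $\liminf \tau=\beta>0$ gives $\tau(\xi)>\beta/2$ eventually, hence $\xi^{-\beta/2}/f(\xi)>1$ and \eqref{eq:decay-condition-2} holds with $\alpha_0=\beta/2$. Finally, the implications $f\in\mathcal{F}(\mathbb{L})$ and $f\notin\mathcal{F}(\mathbb{L})$ follow immediately from parts (i) and (ii) of Theorem \ref{thm:fourier-set-liouville-decreasing}.
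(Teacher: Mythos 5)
Your proposal is correct and follows essentially the same route as the paper: the key identity $\xi^{-\alpha}/f(\xi)=\exp\bigl((\tau(\xi)-\alpha)\log\xi\bigr)$ together with the substitution $\xi=e^{\gamma}$ identifying $\phi(\gamma)/\gamma$ with $\tau(e^{\gamma})$ is exactly the paper's argument, which you merely carry out with more explicit $\epsilon$-bookkeeping (including the observation that boundedness of $f$ forces $\liminf\tau\ge 0$, which the paper states as ``$\tau\ge 0$ for sufficiently large $\xi$'').
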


Our next result presents some sort of ideal-type property of the Fourier set of $\mathbb{L}$, under some additional conditions. The point here is to extend our analysis to address another question related to functions which do not fall under any of the two conditions of Theorem \ref{thm:fourier-set-liouville-decreasing}. An example of such function would be 
\begin{equation}\label{eq:example-cos/log}
f(\xi) = \frac{|\cos(2\pi \xi)|}{\log (\xi)}.
\end{equation}
In this example, $f$'s decay is somewhat comparable to that of $\frac{1}{\log (\xi)}$, which belongs in $\mathcal{F}(\mathbb{L})$, but $f$ fluctuates in a way that could allow us to say that it has a faster decay. To be abundantly precise, condition \eqref{eq:decay-condition} fails because of the unbounded set of zeroes of the function $f$.
Additionally, condition \eqref{eq:decay-condition-2} fails because
\begin{equation*}
    \lim_{k \rightarrow \infty} \frac{\left(\frac{k}{2}\right)^{-\alpha}}{f\left(\frac{k}{2}\right)} = \lim_{k \rightarrow \infty} \left( \frac{k}{2} \right)^{-\alpha} \log\left( \frac{k}{2} \right) = 0 \ \forall \ \alpha >0.
\end{equation*}
This responds to the fact that that $f$ fluctuates between behaving like the function $\frac{1}{\log (\xi)}$, which belongs to $\mathcal{F}(\mathbb{L})$, and the constantly zero function, which clearly does not belong to $\mathcal{F}(\mathbb{L})$.

Although this is not enough to decide in full generality if a function with such behaviour belongs in $\mathcal{F}(\mathbb{L})$ or not, we can be sure that there are functions like this in $\mathcal{F}(\mathbb{L})$ and that they can be constructed from other functions in $\mathcal{F}(\mathbb{L})$. 

Another interesting example is given by the function
\begin{equation}\label{eq:example-xi^cos}
    f(\xi) = \xi^{- |\cos(\pi \xi)|}.
\end{equation}
In a similar fashion, this function is sometimes comparable to $\xi^{-1/2}$ and some other times comparable to 1. This means that neither condition \eqref{eq:decay-condition} nor condition \eqref{eq:decay-condition-2} are satisfied.

We will analyze each one of the previous examples and show that $\frac{|\cos(2\pi \xi)|}{\log (\xi)}\in \mathcal{F}(\mathbb{L})$ and that $\xi^{- |\cos(\pi \xi)|}\notin \mathcal{F}(\mathbb{L})$  as a consequence of Theorem \ref{thm:multiplier-fourier-series-period1} below. These two examples show the two possible situations regarding the membership in $\mathcal{F}(\mathbb{L})$ for functions not satisfying neither of the conditions \eqref{eq:decay-condition} or \eqref{eq:decay-condition-2}.
This will turn out to be a consequence of the fact that $\mathbb{L}$ is invariant under integer translations.

\begin{theorem}\label{thm:multiplier-fourier-series-period1}
    Let $f \in \mathcal{F}(\mathbb{L})$. Let $g: \mathbb{R} \rightarrow \mathbb{R}$ be a function which satisfies the following conditions:
    \begin{enumerate}[(1)]
        \item $g$ is even and $1$-periodic.
        \item $g$ is not identically equal to 0.
        \item $g$ has a $1$-periodic Fourier series $S_N(\xi)$ with nonnegative coefficients.
        \item $S_N$ converges to $g$ pointwise.
    \end{enumerate}
    Then $|g|f \in \mathcal{F}(\mathbb{L})$.
\end{theorem}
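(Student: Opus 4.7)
\emph{Plan.} The idea is to exploit the invariance $\mathbb{L}+k=\mathbb{L}$ for $k\in\mathbb{Z}$ together with the non-negativity of the Fourier coefficients of $g$ in order to build, from a realizing measure for $f$, a new measure on $\mathbb{L}$ whose Fourier transform picks up the factor $g$.

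First, by the hypothesis $f\in\mathcal{F}(\mathbb{L})$, fix $\mu\in\mathcal{P}(\mathbb{L})$ with $|\hat\mu(\xi)|\lesssim f(\xi)$. The identity $\|q(x+k)\|=\|qx\|$ for $k\in\mathbb{Z}$ shows that $\mathbb{L}$ is invariant under the integer translation $T_k(x)=x+k$, so each pushforward $\mu_k:=(T_k)_\ast\mu$ is a probability measure supported on $\mathbb{L}$, with
\begin{equation*}
\widehat{\mu_k}(\xi)=e^{-2\pi i k\xi}\hat\mu(\xi).
\end{equation*}

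Second, expand $g$ (extended evenly) in its $1$-periodic cosine Fourier series
\begin{equation*}
g(\xi)=a_0+\sum_{k\ge 1}a_k\cos(2\pi k\xi),\qquad a_k\ge 0,
\end{equation*}
with partial sums $S_N$. Pointwise convergence at $\xi=0$, combined with $a_k\ge 0$, forces
\begin{equation*}
C:=a_0+\sum_{k\ge 1}a_k=g(0)\in(0,+\infty),
\end{equation*}
so that in fact the expansion converges absolutely and uniformly on $\mathbb{R}$. This summability lets us define
\begin{equation*}
\nu:=\frac{a_0}{C}\,\mu+\sum_{k\ge 1}\frac{a_k}{2C}\bigl(\mu_k+\mu_{-k}\bigr),
\end{equation*}
which is a norm-convergent convex combination of probability measures on $\mathbb{L}$, and hence itself a probability measure on $\mathbb{L}$.

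Third, interchanging sum and integral (justified by $\sum_k a_k<\infty$ together with $|\hat\mu|\le 1$) gives
\begin{equation*}
\hat\nu(\xi)=\frac{\hat\mu(\xi)}{C}\Bigl(a_0+\sum_{k\ge 1}a_k\cos(2\pi k\xi)\Bigr)=\frac{g(\xi)}{C}\,\hat\mu(\xi),
\end{equation*}
so $|\hat\nu(\xi)|\lesssim|g(\xi)|f(\xi)$, and therefore $|g|f\in\mathcal{F}(\mathbb{L})$.

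There is no substantive obstacle in this argument. The non-negativity of the Fourier coefficients is precisely what allows the combination of translates to be normalized as a probability measure, while the evenness of $g$ is what makes that real combination of translates realize multiplication by $g$ (rather than by $\overline{g}$) on the Fourier side. The assumption $g(0)\neq 0$ is used exactly to ensure $C>0$ so that $\nu$ can be normalized. The only bookkeeping is the absolute summability $\sum a_k=g(0)<\infty$, from which both the existence of $\nu$ as a probability measure and the legitimacy of the term-by-term Fourier transform follow immediately.
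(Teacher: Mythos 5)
Your proof is correct and follows essentially the same route as the paper: both exploit the integer translation invariance of $\mathbb{L}$ and the nonnegativity of the Fourier coefficients to form the combination $\sum_k \tfrac{a_k}{2}(\mu_k+\mu_{-k})$, with $\sum_k a_k=g(0)>0$ providing both summability and positive total mass. The only (harmless) difference is that you pass to the limit by observing that the series of measures converges in total variation, whereas the paper works with the partial sums $\mu^{(N)}$ and invokes L\'evy's continuity theorem together with the pointwise convergence $S_N\to g$; your normalization by $C=g(0)$ is cosmetic.
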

By using this result, we will provide an answer to the question above about the two examples of fluctuating functions in Proposition \ref{prop:simple-cases-in-out}.

\subsection{Outline}\label{sec:outline}
This article is organized as follows. In Section \ref{sec:proofs} we present the proof of the main result Theorem \ref{thm:fourier-set-liouville-decreasing} and also the proof of several technical reformulations of it. In Section \ref{sec:SI-PM} we move forward to the analysis of oscillating decays and provide a result exploiting the integer translation invariance of $\mathbb{L}$.
\ 

\ 

\

\section{Proofs}\label{sec:proofs}
For the sake of completeness, it will be useful to review Bluhm's construction of a Rajchman measure supported on $\mathbb{L}$; that is,  a measure $\mu_{\infty}$ such that 
\begin{equation*}
    \widehat{\mu_{\infty} }(\xi) \xrightarrow{|\xi|\rightarrow +\infty} 0.
\end{equation*}

Let $\mathbb{P}_M $ denote the set of all prime numbers between $M$ and $2M$, with $M$ being a positive integer. Let $\| x \| = \min_{m \in \mathbb{Z} } |x-m| $.  For any given sequence $(M_k)_k$ of increasing natural numbers such that $M_1 < 2M_1 < M_2 < 2M_2 < M_3 < \ldots $ we define the set 
\begin{equation*}
    S_{\infty} = S_{\infty} \left( (M_k)_k \right) = \bigcap_{k=1}^{\infty} \bigcup_{p \in \mathbb{P}_{M_k}} \{ x \in [
0,1]: \| px \| \le p^{-1-k}  \}.
\end{equation*}

The author constructs a measure $\mu_{\infty}$ in a way to guarantee that it is supported on $S_{\infty} $, and with certain hypotheses on the sequence $(M_k)_k$ it can be proved that $\widehat{\mu_\infty} \xrightarrow{|\xi| \rightarrow +\infty} 0$. This will work for our purposes since, as it can easily be checked, $S_{\infty} \setminus \mathbb{Q} \subset \mathbb{L} $. 

The construction involves defining functions $g_M$. We will not give the full detail, but suffice to say that, for every $k >0$, $M \in \mathbb{N} $, we can define functions $g_M = g_{M,k} $ such that 
\begin{itemize}
    \item $g_M \in C^2(\mathbb{R} ) $,
    \item $g_M$ is $1$-periodic,
    \item supp$(g_M) \subset \bigcup_{p \in \mathbb{P}_M } \{ x: \| px \| \le p^{-1-k} \} $,
    \item $\widehat{g_M}(0)=1$.
\end{itemize}

With this, we are in a position to quote a lemma, proved in \cite{blu98}, about a very important property of the functions $g_M$ that will be of vital importance.

\begin{lemma}\label{lem:Bluhm-generator}
    For every $\psi \in C_0^2 (\mathbb{R})$, $k \in \mathbb{N}$ and $\delta>0$ there exists $M_0 = M_0(\psi,k, \delta)$ such that
    \begin{equation}\label{eq:Bluhm-generator-inequality}
        |\widehat{\psi g_{M,k}}(\xi) - \widehat{\psi}(\xi)| \le \delta \theta_k(\xi) \ \forall \ \xi \in \mathbb{R}
    \end{equation}
    for all $M\ge M_0$, where $\theta_k(\xi) = (1+|\xi|)^{-1/(2+k)} \log(e+|\xi|) \log(e+\log(e+|\xi|))$. 
\end{lemma}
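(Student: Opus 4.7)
The plan is to exploit the $1$-periodicity of $g_{M,k}$ to Fourier-decompose the product $\psi g_{M,k}$ and reduce the bound to the control of a sum of translates of $\widehat{\psi}$. Since $g_{M,k}\in C^2$ and is $1$-periodic, it has a Fourier series $g_{M,k}(x)=\sum_{n\in\mathbb{Z}}c_n^{(M)}e^{2\pi i n x}$ with $c_0^{(M)}=\widehat{g_{M,k}}(0)=1$. Because multiplying by $e^{2\pi i n x}$ on the space side translates the Fourier transform by $n$, one has
\[
\widehat{\psi g_{M,k}}(\xi)-\widehat{\psi}(\xi)=\sum_{n\neq 0}c_n^{(M)}\,\widehat{\psi}(\xi-n),
\]
so the entire problem becomes an estimation of this tail, uniformly in $\xi$, with profile $\theta_k(\xi)$.

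Next I would extract two quantitative estimates on $c_n^{(M)}$ from Bluhm's specific construction of $g_{M,k}$ as a normalized average over primes $p\in\mathbb{P}_M$ of smooth bumps of width $\sim p^{-1-k}$ living on the $1/p$-periodic set $\{\|px\|\le p^{-1-k}\}$. Each individual bump attached to $p$ is $1/p$-periodic, so only frequencies $n$ divisible by $p$ are seen; for $|n|<M^2$ at most one prime in $\mathbb{P}_M$ can divide $n$, yielding the \emph{smallness bound}
\[
|c_n^{(M)}|\lesssim \frac{1}{|\mathbb{P}_M|}\qquad (0<|n|<M^2),
\]
which tends to $0$ as $M\to\infty$ uniformly on any fixed bounded range of $n$. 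Independently, the $C^2$ regularity of $g_{M,k}$ gives the \emph{smoothness bound} $|c_n^{(M)}|\lesssim \|g_{M,k}''\|_\infty\, n^{-2}\lesssim M^{2(1+k)} n^{-2}$, valid for all $n\neq 0$.

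With these two ingredients, I would split the tail sum at a threshold $N=N(\xi,k)$ tailored to balance the two regimes. For the low part $0<|n|\le N$, using $|\widehat{\psi}(\xi-n)|\lesssim_\psi (1+|\xi-n|)^{-2}$ and the smallness bound gives a contribution $\lesssim C_\psi/|\mathbb{P}_M|$, which is made smaller than, say, $\tfrac{\delta}{2}\theta_k(\xi)$ by choosing $M_0(\psi,k,\delta)$ large enough — this is precisely where the dependence on $\psi,k,\delta$ enters. For the high part $|n|>N$, combining the smoothness bound with the decay of $\widehat{\psi}$ and the distance $|\xi-n|\ge |n|-|\xi|$ yields a bound of order $M^{2(1+k)}N^{-1}$ times a logarithmic factor coming from the sum $\sum_{|n|>N}n^{-2}(1+|\xi-n|)^{-2}$. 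Optimizing by taking $N\sim (1+|\xi|)^{(1+k)/(2+k)}$ produces the decay rate $(1+|\xi|)^{-1/(2+k)}$, with the double-logarithm corrections in $\theta_k$ absorbing the logarithmic losses from tail summation and from translates concentrating near $\xi$.

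The main obstacle will be the delicate two-parameter bookkeeping: $M$ must be chosen first as a function of $\psi,k,\delta$ (via the low-frequency smallness $1/|\mathbb{P}_M|$), yet the high-frequency bound depends on $M^{2(1+k)}$ and must still be absorbed into $\delta\theta_k(\xi)$. The point is that for \emph{large} $|\xi|$ — say $|\xi|\ge|\xi_0(M,k)|$ — the high-frequency tail is automatically smaller than $\delta\theta_k(\xi)$ regardless of the fixed constant $M^{2(1+k)}$, while for $|\xi|\le |\xi_0|$ the bound $\theta_k(\xi)\gtrsim 1$ lets one simply use the trivial estimate $|\widehat{\psi g_{M,k}}-\widehat{\psi}|\le 2\|\psi\|_1\|g_{M,k}\|_\infty$ together with the smallness bound. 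Once this split is organized carefully, the exponent $1/(2+k)$ and the exact form of $\theta_k$ fall out of the optimization.
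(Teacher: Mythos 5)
A preliminary point: the paper does not prove this lemma at all --- it is quoted from Bluhm \cite{blu98} (who adapts Kaufman's argument), so there is no in-paper proof to compare against. Your skeleton --- expand the $1$-periodic $g_{M,k}$ in a Fourier series, write $\widehat{\psi g_{M,k}}(\xi)-\widehat{\psi}(\xi)=\sum_{n\neq 0}c_n^{(M)}\widehat{\psi}(\xi-n)$, and play a ``smallness'' bound on the coefficients (from averaging over $\mathbb{P}_M$) against a decay bound --- is indeed the standard skeleton of that proof.

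The execution, however, has a genuine gap. Your low-frequency estimate is $\lesssim C_\psi/|\mathbb{P}_M|$, \emph{uniformly in $\xi$}, and you propose to make it $\le \tfrac{\delta}{2}\theta_k(\xi)$ by enlarging $M$; this is impossible, because $\theta_k(\xi)\to 0$ as $|\xi|\to\infty$ while $C_\psi/|\mathbb{P}_M|$ is a fixed positive constant once $M$ is chosen. The $\xi$-decay on the range $|n|\le N(\xi)\ll|\xi|$ must instead come from the localization $|\widehat{\psi}(\xi-n)|\lesssim(1+|\xi-n|)^{-2}\lesssim|\xi|^{-2}$, and the genuinely delicate terms are those with $n$ within $O(1)$ of $\xi$. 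These sit in your ``high part,'' but there the crude bound $|c_n|\lesssim\|g_{M,k}''\|_\infty n^{-2}$ is far too weak to explain the exponent: in the regime $|n|\lesssim M^{2+k}$ the coefficients are essentially flat of size $\omega_M(n)/|\mathbb{P}_M|$, where $\omega_M(n)\le\log|n|/\log M$ counts the primes of $\mathbb{P}_M$ dividing $n$ (your bound $|c_n|\lesssim 1/|\mathbb{P}_M|$ is only valid for $|n|<M^2$, yet your low range is unbounded in $\xi$). It is this divisor count that produces the $\log(e+|\xi|)$ factor in $\theta_k$, and it is the plateau of $\widehat{\phi_p}$ up to frequency $\sim p^{2+k}$ (the reciprocal width of the bumps) that produces the exponent $1/(2+k)$: at $|\xi|\sim M^{2+k}$ the near-diagonal coefficient size $1/|\mathbb{P}_M|\sim\log M/M$ matches $\theta_k(\xi)$, and only beyond that does coefficient decay take over. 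Your optimization $N\sim(1+|\xi|)^{(1+k)/(2+k)}$ yields exponents strictly better than $1/(2+k)$, which is a symptom that the decisive contribution (the terms $n\approx\xi$) has been mislocated. The strategy is right; the bookkeeping must be reorganized around those near-diagonal terms before the stated form of $\theta_k$ can ``fall out.''
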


This will allow us to proceed inductively. It will be useful to give a name to the first function of this sequence.

\begin{definition}\label{def:bump}
A  function $\psi_0$ will be called a smooth bump function if
    \begin{itemize}
    \item $\psi_0 \in C_0^2 (\mathbb{R} )$
    \item $\displaystyle\int \psi_0(x)dx = 1 $
    \item $\psi_0 |_{(0,1)} >0 $
    \item $\psi_0 |_{\mathbb{R}\setminus [0,1] } \equiv 0 $,
\end{itemize}
\end{definition}

Starting with a smooth bump function and using Lemma \ref{lem:Bluhm-generator}, the author can choose a specific sequence $(M_k)_k$ of natural numbers and use it to construct a sequence of measures ${\mu_k} $. These measures converge weakly to a measure $\mu_\infty$ which satisfies
\begin{equation}\label{eq:Bluhm-function}
    |\widehat{\mu_\infty}(\xi) | \le \sum_{k=1}^\infty c_k \theta_k(\xi) =: B(\xi),
\end{equation}
for certain specifically defined constants $c_k$. It can be shown that the function $B(\xi)$, which we will  refer to as ``Bluhm's function'', decays to $0$ at infinity, which is what Bluhm wanted to show in his paper. However, Bluhm was not concerned with the rate of decay of the function $B$. We do not get into the full details of the last part of his construction, because it is exactly this part that we will tweak in order to prove our own result,  and as such we will give a detailed explanation there. Note that with our notation $B \in \mathcal{F}(\mathbb{L} ) $. We will exploit this, along with the rate of decay of $B$, to prove Theorem \ref{thm:fourier-set-liouville-decreasing} and show that a large class of functions are in $\mathcal{F}(\mathbb{L} )$. 

Before the proof, let us briefly discuss the reach of the result. The theorem guarantees that under certain hypotheses a decreasing (or continuous) function $f$ belongs to $\mathcal{F}(\mathbb{L})$. Because of the equivalence relation that defines $\mathcal{F}(\mathbb{L})$, we can actually say that this holds for any function which is asymptotically close to some decreasing (or continuous) function. 

We now proceed with the proof of Theorem \ref{thm:fourier-set-liouville-decreasing}.

\begin{proof}[Proof of Theorem \ref{thm:fourier-set-liouville-decreasing}]
We may assume without loss of generality that $f(0) = \frac{1}{2}$ by replacing it, if necessary, with a representative that satisfies the same hypothesis (continuity or monotonicity) that $f$ does.

To prove $(i)$, we will use Lemma \ref{lem:Bluhm-generator} in a slightly different way than Bluhm did. This will allow us to produce Rajchman measures whose decay we can compute explicitly. 
Let $f$ be a function satisfying \eqref{eq:decay-condition}. We already mentioned  that the existence of a sequence $(\xi_k)_k$ such that $\xi_k \nearrow \infty$ and $f(\xi_k) = 0$ denies \eqref{eq:decay-condition}. As such, the set of zeroes of $f$ is bounded, and we can therefore assume that $f$ is positive by conveniently replacing it with a representative that satisfies the same hypothesis that $f$ does. Once this is noted, we are able to state that the quantity
\begin{equation*}
        \rho_k = \left[ \max_{\xi \in \mathbb{R}} \frac{\theta_k(\xi)}{f(|\xi|)} \right] ^{-1}
\end{equation*}
is well defined for every $k \in \mathbb{N}$, where $\theta_k$ is as in Lemma \ref{lem:Bluhm-generator}. Indeed,
\begin{align*}
    \frac{\theta_k(\xi)}{f(|\xi|)} & = \frac{(1+|\xi|)^{-1/(2+k)} \log(e+|\xi|) \log(e+\log(e+|\xi|))}{f(|\xi|)}\\ 
    & = \underbrace{(1+|\xi|)^{-1/[2(2+k)]} \log(e+|\xi|) \log(e+\log(e+|\xi|))}_{A_1 (\xi)} \underbrace{\frac{(1+|\xi|)^{-1/[2(2+k)]}}{f(|\xi|)}}_{A_2 (\xi)}.
\end{align*}

We note here that $A_1 (\xi)\xrightarrow{|\xi|\rightarrow +\infty} 0$ trivially holds. Moreover, we also have that  $A_2 (\xi)\xrightarrow{|\xi|\rightarrow +\infty} 0$ as a consequence of the assumption  \eqref{eq:decay-condition}.  Therefore, there exists $N>0$ such that
\begin{equation*}
    0<\frac{\theta_k(\xi)}{f(|\xi|)} \le 1 \ \textnormal{if} \ |\xi| >N.
\end{equation*}

Now, if $f$ is decreasing it turns out that in $[-N,N]$
\begin{equation*}
    \frac{\theta_k(\xi)}{f(|\xi|)} \le \frac{\theta_k(\xi)}{f(N)},
\end{equation*}
and this function has an absolute maximum in $[-N,N]$ because it is continuous. If $f$ were not decreasing but continuous and positive, a similar bound would hold, although the maximum would occur at a point $\xi_0$ in $[-N,N]$ which might not be precisely $N$. In any case, $\frac{\theta_k(\xi)}{f(|\xi|)}$ has an absolute maximum in $\mathbb{R}$ and $\rho_k$ is well-defined.

We are now in a position to invoke Lemma \ref{lem:Bluhm-generator} and use it to create the desired measure. Let us explain this in detail.

Start with a smooth bump function $\psi_0$, and choose $k=1$ and $\delta = 2^{-1} \rho_1$. The lemma then guarantees the existence of a first $M_1 = M_1 (\psi_0, 1, 2^{-1} \rho_1)$ such that 
\begin{equation*}
    |\widehat{\psi_0 g_{M_1}}(\xi) - \widehat{\psi_0}(\xi)| \le 2^{-1} \rho_1 \theta_1 (\xi).
\end{equation*}

We can proceed inductively and choose $M_k = M_k(\psi_0 G_{k-1},k, 2^{-k} \rho_k )$, such that
\begin{equation}\label{eq:adapted-Bluhm-generator}
    |\widehat{\psi_0 G_{k}}(\xi) - \widehat{\psi_0 G_{k-1} }(\xi)| \le 2^{-k} \rho_k \theta_k (\xi),
\end{equation}
where $G_k = \prod_{i=1}^k g_{M_i}$.

Now construct the set $S_{\infty}$ associated to $(M_k)_k$. Let $\lambda$ be the Lebesgue measure on $\mathbb{R} $ and define the measures
\begin{equation*}
    \mu_k = \psi_0 G_k \lambda, \quad k \in \mathbb{N}_0.
\end{equation*}
Since $\psi_0, g_M$ are all integrable and bounded, it is clear that $(\mu_k)_k$ is a sequence of bounded measures. We will see that $(\mu_k)_k$ converges weakly to some positive finite measure $\mu_{\infty}$ which satisfies all the desired properties. 

First, resume \eqref{eq:adapted-Bluhm-generator} to see that
\begin{align*}
|\widehat{\psi_0 G_{k}}(\xi) - \widehat{\psi_0 G_{k-1} }(\xi)| &\le 2^{-k} \rho_k \theta_k (\xi) \\
&\le 2^{-k} \frac{f(|\xi|)}{\theta_k(\xi)}\theta_k(\xi) \\
&\le 2^{-k} \| f \|_{\infty}.
\end{align*}
This shows that $(\widehat{\mu_k} )_k$ is a Cauchy sequence with respect to $\| \|_{\infty}$, and as such it converges to a continuous function. A version of Levy's continuity theorem for finite measures (see \cite{schi17}, p. 255) then guarantees the existence of a finite measure $\mu_\infty$ which is the weak limit of $(\mu_k)_k$.

Let us see that this measure has the desired decay. Indeed:
\begin{align*}
    |\hat{\mu}_k(\xi)| &\le |\hat{\mu}_0(\xi)| + \sum_{j=1}^k |\hat{\mu}_j(\xi)- \hat{\mu}_{j-1}(\xi)| \\
    & \lesssim |\xi|^{-2} + \sum_{j=1}^k 2^{-j} \rho_j \theta_{j}(\xi)  \lesssim f(|\xi|),
\end{align*}
using that $\rho_k \theta_{k}(\xi) \le f(|\xi|)$ uniformly for all $k\in\mathbb{N}$ (recall the definition of $\rho_k$). The last inequality holds for $|\xi|>N'$ for sufficiently large $N'$, with neither $N'$ nor the constant depending on $k$. Taking the limit as $k \rightarrow \infty$ guarantees that
\begin{equation*}
    |\hat{\mu}_{\infty}(\xi)| \lesssim f(|\xi|).
\end{equation*}

A similar calculation shows that
\begin{equation*}
    |\hat{\mu}_k(0)-1| =  |\hat{\mu}_k(0)-\hat{\mu}_0(0)| \le f(0) = \frac{1}{2},
\end{equation*}
which serves to prove that $\mu_\infty$ has positive mass. 

Finally, it is clear that
\begin{equation*}
    \textnormal{supp}(\mu_k) = \textnormal{supp} (G_k) = \bigcap_{j=1}^k 
    \textnormal{supp} (g_j) \subset \bigcap_{j=1}^k \bigcup_{p \in \mathbb{P}_{M_j} } \{ x: \| px \| \le p^{-1-j} \},
\end{equation*}
and therefore supp$(\mu_\infty) \subset S_\infty$. The fact that $\mu_\infty$ is a Rajchman measure implies that it does not have any atoms. We refer the reader to \cite{lyo95} for a comprehensive survey on Rajchman measures, where the original proof of this fact due to Neder is cited \cite{Neder}. Therefore $\mu_{\infty}(\mathbb{Q})=0$. As such, there exists a closed subset of $S_\infty \setminus \mathbb{Q} \subset \mathbb{L}$ with positive $\mu_\infty$ measure. This finishes the proof of $(i)$.

The proof of $(ii)$ is rather straightforward. Suppose there exists an $\alpha_0>0$ such that
\begin{equation*}
    \liminf_{\xi \rightarrow +\infty} \frac{\xi^{-\alpha_0}}{f(\xi)}>0.
\end{equation*}
This implies the existence of an $M>0$ such that
\begin{equation*}
    f(|\xi|) \le |\xi|^{-\alpha_0} \ \textnormal{when} \ |\xi|\ge M.
\end{equation*}
If it were the case that $f \in \mathcal{F}(\mathbb{L})$, this would imply that $\xi^{-\alpha_0} \in \mathcal{F}(\mathbb{L})$. This in turn would imply that
\begin{equation*}
    \dim(\mathbb{L}) \ge \dim_F(\mathbb{L}) \ge \min \{ 1, 2\alpha_0 \} >0,
\end{equation*}
which is a contradiction since we already know that $\dim (\mathbb{L})=0$.

\end{proof}

Suppose now that we have a measure $\mu$ whose Fourier decay verifies $|\widehat{\mu}(\xi)| \le f(\xi)$, and we want to produce another measure 
$\nu$ whose Fourier decay verifies $|\widehat{\nu}(\xi) | \le g(\xi)$ for some function $g = o(f)$ (i.e. $\lim_{\xi \rightarrow +\infty} \frac{g(\xi)}{f(\xi)} = 0$). We would like to better understand the similarities and differences between $\mu$ and $\nu$. 

In the proof of Lemma \ref{lem:Bluhm-generator}, it is seen that $M_0$ has a decreasing dependence on $\delta$; that is, the smaller the $\delta$ that we provide, the bigger the $M_0$ that will allow \eqref{eq:Bluhm-generator-inequality} to hold for every $M \ge M_0$. In the proof of Theorem \ref{thm:fourier-set-liouville-decreasing}, we use Lemma \ref{lem:Bluhm-generator} in succesive steps to produce a measure (say, $\mu$) with decay controled by a function (say, $f$). The sequence $(\delta_k)_k$, whose $k$-th term is the $\delta$ used in the $k$-th step, is given by 
\begin{equation*}
    \delta_k = \delta_k (f) = 2^{-k} \rho_k (f) = 2^{-k} \left(\max{\frac{\theta_k}{f}}\right)^{-1}.
\end{equation*}
Because of the way $\rho_k$ is defined, the faster a function $f$ decays to zero at infinity, the smaller every $\delta_k (f)$ will be. What this implies is that, when this method is applied to produce a measure $\nu$ with Fourier decay controled by $g$, the faster decay of $g$ will produce a sequence $(N_k)_k$ which will grow faster than the sequence $(M_k)_k$ that $f$ will produce. In turn, the measure $\nu$ will be supported on a set $S_\infty ((N_k)_k)$ which is different than the set $S_\infty ((M_k)_k)$ which supports $\mu$.

As such, these measures are different in a substantial manner. Note that the set $S_\infty ((N_k)_k)$ is larger than the set $S_\infty ((M_k)_k)$ in the intuitive sense that it allows for a faster Fourier decay, a notion that could potentially be made precise with a proper generalization of the notion of Fourier dimension.

We should now recall that a decreasing or continuous function $f$ may verify neither $\eqref{eq:decay-condition}$ nor \eqref{eq:decay-condition-2}, as the examples showed in the introduction. For such a function, Theorem \ref{thm:fourier-set-liouville-decreasing} would not tell us whether $f \in \mathcal{F}(\mathbb{L} )$. This opens up a ``gap'' of functions whose membership in $\mathcal{F}(\mathbb {L}) $ is not described. Besides the aforementioned examples, we exhibit here another interesting decreasing function. For the sake of simplicity, we present a non-continuous step-like construction, but the reader may easily verify that it can be modified to make it continuous.

\begin{remark}
Let us consider the following function:
    \begin{equation}\label{eq:step-function}
    f(\xi) = \sum_{k=0}^{\infty} \frac{1}{a_k} \chi_{[a_k,a_{k+1})}(\xi), \quad \textnormal{where} \ a_k = e^{e^{e^k}}.
\end{equation}
It is easy to see that 

\begin{equation}\label{eq:left-end-of-steps}
    f(a_k) = \frac{1}{a_k}        
    \end{equation}
but
    \begin{equation}\label{eq:right-end-of-steps}
    \lim_{\xi \rightarrow a_k^-} f(\xi) = \exp \left( -\log^{1/e}(a_k) \right).
\end{equation}
\eqref{eq:left-end-of-steps} implies that $f$ does not verify \eqref{eq:decay-condition}, and \eqref{eq:right-end-of-steps} that $f$ does not verify \eqref{eq:decay-condition-2}. Deciding whether $f$ belongs to $\mathcal{F}(\mathbb{L} )$ or not would require further investigation. 
\end{remark}

\begin{figure}[ht]
\centering
    \includegraphics[width=\textwidth]{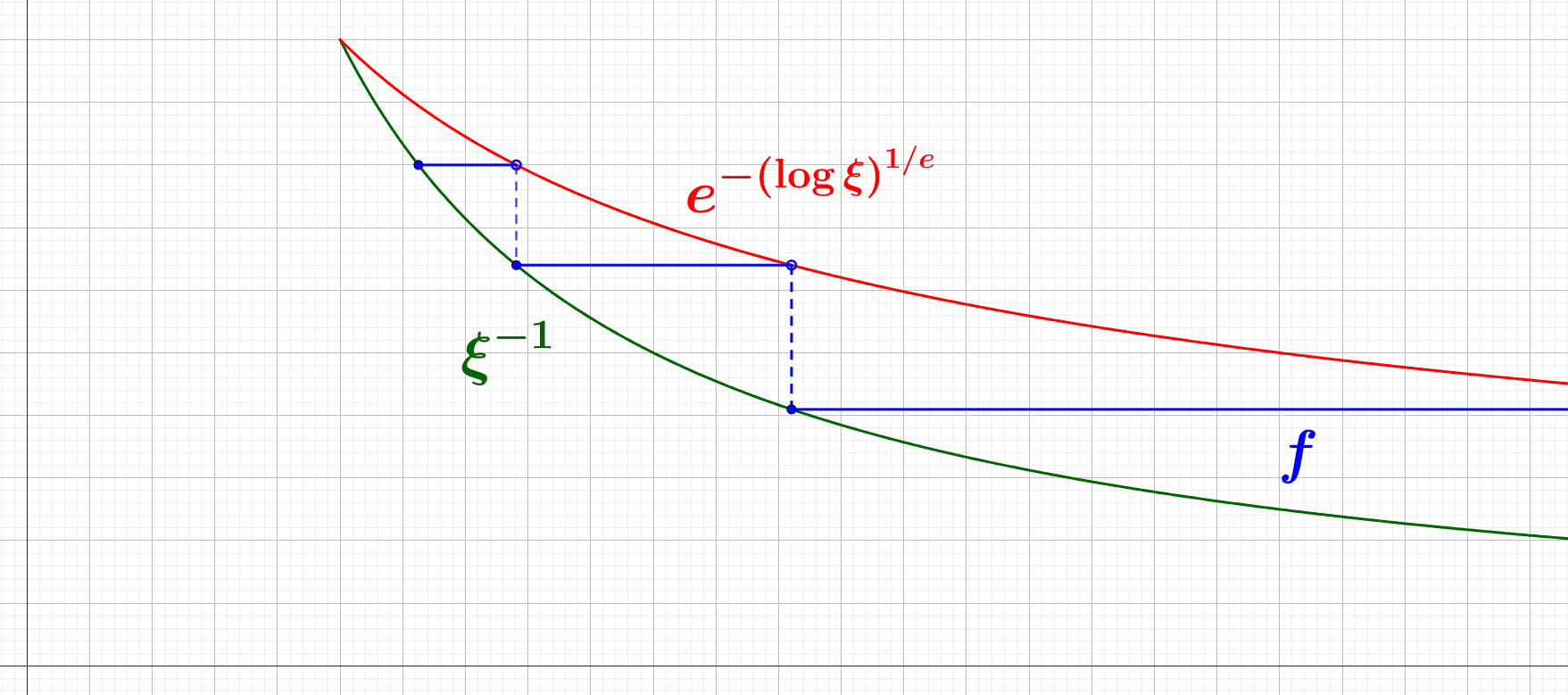}
    \caption{The function $f$ from \eqref{eq:step-function}. Its graph is made of segments which extend horizontally between the graphs of the functions $\xi^{-1} \notin \mathcal{F}(\mathbb{L})$ and $\exp \left( -\log^{1/e}(\xi) \right) \in \mathcal{F}(\mathbb{L})$.}
    \label{fig:remark-function}
\end{figure}

Now we present the proof of Theorem \ref{thm:equivalent-decay-conditions} that reinterprets conditions \eqref{eq:decay-condition} and \eqref{eq:decay-condition-2} in terms of functions related to $f$.

\begin{proof}[Proof of Theorem \ref{thm:equivalent-decay-conditions}]
    We can assume without loss of generality that $\| f \|_\infty \le \frac{1}{2}$. The key idea for proving that \eqref{eq:tau-guarantees-inside} is equivalent to \eqref{eq:decay-condition} is the following:
    \begin{align*}
        \limsup_{\xi \rightarrow + \infty} \frac{\xi^{-\alpha}}{f(\xi)}
        &= \limsup_{\xi \rightarrow + \infty} \frac{e^{- \alpha \log(\xi)}}{e^{\log(f(\xi))}} \\
        &= \limsup_{\xi \rightarrow + \infty} e^{-\alpha \log(\xi) - \log(f(\xi))} \\
        &= \limsup_{\xi \rightarrow + \infty} e^{\log(\xi) (-\alpha + \tau(\xi))}.
    \end{align*}
Due to the fact that $\tau \ge 0$ for sufficiently large $\xi$ and the continuity of the exponential, the limit superior will be zero for every $\alpha>0$ if and only if the limit superior of $\tau$ is zero. The fact that \eqref{eq:tau-guarantees-outside} is equivalent to \eqref{eq:decay-condition-2} follows from an analogous calculation, taking $\alpha_0 = \frac{1}{2} \liminf_{\xi \rightarrow + \infty } \tau(\xi)$. The equivalences regarding $\phi$ follow from applying the change of variables $\xi = e^{\gamma}$ and following the same calculations.

\end{proof}

Each of these interpretations enriches our understanding of the problem in a different way. The original formulation of the theorem allows us to understand whether $f \in \mathcal{F}(\mathbb{L})$ according to how its rate of decay relates to the decays of functions of the type $\xi^{-\alpha}$. Interpretations in terms of $\tau$ and $\phi$ both make the same two improvements: they provide an extra hypothesis that closes the aforementioned gap (and as such allows as to show that either $f$ belongs to $\mathcal{F}(\mathbb{L})$ or it does not), and they provide intelligent ways of generating functions which clearly do or do not satisfy the hypotheses of our theorems.

Let us begin with $\phi$. We have the following theorem that closes the gap. The extra hypothesis (the concavity of $\phi$) is analogous to the concavity hypothesis on $h$ in Theorem 1 of \cite{ols05}, which is a weaker version of Theorem \ref{thm:Olsen-Renfro}.

\begin{theorem}\label{thm:phi-desambiguates}
     Let $\phi$ be defined as in Theorem \ref{thm:equivalent-decay-conditions}. If $\phi$ is concave, then one and only one of the scenarios \eqref{eq:decay-condition} and \eqref{eq:decay-condition-2} occurs, and as such the membership of $f$ in $\mathcal{F}(\mathbb{L})$ is completely determined by the respective limits.
\end{theorem}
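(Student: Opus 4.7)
The plan is to reduce the dichotomy to the classical fact that, for a concave function $\phi$ on $[0,\infty)$, the ratio $\phi(\gamma)/\gamma$ admits a (finite) limit as $\gamma\to\infty$. Once this limit $L$ is shown to exist in $[0,\infty)$, the two scenarios of Theorem \ref{thm:equivalent-decay-conditions} correspond exactly to $L=0$ and $L>0$, which are mutually exclusive and exhaustive, so exactly one of \eqref{eq:decay-condition}, \eqref{eq:decay-condition-2} occurs.

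First I would exploit concavity: for $0<a<b$, writing $a=(a/b)\cdot b+(1-a/b)\cdot 0$ as a convex combination and applying $\phi(\lambda x + (1-\lambda) y)\ge \lambda \phi(x) + (1-\lambda)\phi(y)$ yields
\begin{equation*}
\phi(a)\ge (a/b)\phi(b)+(1-a/b)\phi(0),
\end{equation*}
which rearranges to the standard inequality
\begin{equation*}
\frac{\phi(a)-\phi(0)}{a}\ge \frac{\phi(b)-\phi(0)}{b}.
\end{equation*}
Hence $\gamma\mapsto (\phi(\gamma)-\phi(0))/\gamma$ is nonincreasing on $(0,\infty)$, so it admits a limit in $[-\infty,\phi(1)-\phi(0)]$ as $\gamma\to\infty$. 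Since $\phi(0)/\gamma\to 0$, the limit $L:=\lim_{\gamma\to\infty}\phi(\gamma)/\gamma$ exists in $[-\infty,\infty)$.

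Next I would verify $L\ge 0$. Under the standing hypotheses of Theorem \ref{thm:equivalent-decay-conditions}, the function $f$ is bounded and strictly positive, so $\phi(\gamma)=-\log f(e^\gamma)\ge -\log\|f\|_\infty$ is bounded below by a finite constant. Dividing by $\gamma>0$ and letting $\gamma\to\infty$ gives $\liminf_{\gamma\to\infty}\phi(\gamma)/\gamma\ge 0$, so $L\in[0,\infty)$.

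Finally, since $\limsup$ and $\liminf$ of $\phi(\gamma)/\gamma$ both coincide with $L$, exactly one of the alternatives $L=0$ and $L>0$ holds. The first is precisely \eqref{eq:phi-guarantees-inside} and the second is precisely \eqref{eq:phi-guarantees-outside}, so by Theorem \ref{thm:equivalent-decay-conditions} exactly one of \eqref{eq:decay-condition}, \eqref{eq:decay-condition-2} holds, and membership of $f$ in $\mathcal{F}(\mathbb{L})$ is determined by Theorem \ref{thm:fourier-set-liouville-decreasing}. The argument has no serious obstacle: the content is the observation that working with $\phi$ (rather than $f$ or $\tau$) linearizes the power-scale comparison, reducing the closure of the ``gap'' to the classical monotonicity of $\gamma\mapsto(\phi(\gamma)-\phi(0))/\gamma$ for concave $\phi$. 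The only technical point requiring attention is confirming that $L$ is finite and nonnegative, which is immediate from the positivity and boundedness already built into the hypotheses.
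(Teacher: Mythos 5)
Your proof is correct, but it takes a genuinely different route from the paper. You derive the dichotomy from the classical monotonicity of difference quotients: for concave $\phi$ the map $\gamma\mapsto(\phi(\gamma)-\phi(0))/\gamma$ is nonincreasing, so $L=\lim_{\gamma\to\infty}\phi(\gamma)/\gamma$ exists, and the lower bound $\phi\ge-\log\|f\|_\infty$ forces $L\in[0,\infty)$; the cases $L=0$ and $L>0$ are then exactly \eqref{eq:phi-guarantees-inside} and \eqref{eq:phi-guarantees-outside}, and Theorem \ref{thm:equivalent-decay-conditions} translates these back to \eqref{eq:decay-condition} and \eqref{eq:decay-condition-2}. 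The paper instead argues by assuming \eqref{eq:decay-condition} fails, invokes Lemma \ref{lem:LS-LI} to produce an $\alpha_0>0$ with $\limsup_{\gamma\to\infty}\bigl(-\alpha_0\gamma+\phi(\gamma)\bigr)=+\infty$, extracts a sequence $(\gamma_k)_k$ along which $-\alpha_0\gamma_k+\phi(\gamma_k)\ge k$, and uses concavity to interpolate between consecutive $\gamma_k$'s, concluding that the $\liminf$ is also $+\infty$ and hence that \eqref{eq:decay-condition-2} holds. Your argument is arguably cleaner and yields slightly more information (the actual existence of the limit $L$ of $\phi(\gamma)/\gamma$, rather than merely the exclusion of the intermediate regime), and it bypasses Lemma \ref{lem:LS-LI} entirely; the paper's sequence-interpolation argument is more self-contained in that it works directly with the failure of \eqref{eq:decay-condition} and does not need the anchoring of the difference quotient at a base point. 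Both proofs use concavity in an essential and comparable way, and I see no gap in yours.
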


\begin{proof}
    It is clear that \eqref{eq:decay-condition} and \eqref{eq:decay-condition-2} are mutually exclusive, so we will be done by assuming that \eqref{eq:decay-condition} is false and proving that \eqref{eq:decay-condition-2} must then be true.

It is useful to state the following lemma, whose proof we omit because it is straightforward.

\begin{lemma}\label{lem:LS-LI}
Let $f: \mathbb{R}_{\ge0} \longrightarrow {\mathbb{R}_{\ge0}}$ be a bounded function and let
\begin{equation*}
LS(\alpha) := \limsup_{\xi \rightarrow +\infty} \frac{\xi^{-\alpha}}{f(\xi)}
    \qquad , \qquad  \ LI(\alpha) := \liminf_{\xi \rightarrow +\infty} \frac{\xi^{-\alpha}}{f(\xi)}.
\end{equation*}

Then the following are true:
\begin{enumerate}[(i)]
    \item $LI(\alpha) \le LS(\alpha)$.
    \item $LI(\alpha_0)>0 \implies LI(\alpha) = +\infty$ for all $0 \le \alpha < \alpha_0$.
    \item $LI(\alpha_0)< +\infty \implies LI(\alpha)=0$ for all $\alpha > \alpha_0$.
    \item $LS(\alpha_0)>0 \implies LS(\alpha) = +\infty$ for all $0 \le \alpha < \alpha_0$.
    \item $LS(\alpha_0)< +\infty \implies LS(\alpha)=0$ for all $\alpha > \alpha_0$.
\end{enumerate}
\end{lemma}

\begin{figure}[ht]
\centering
    \includegraphics[width=\textwidth]{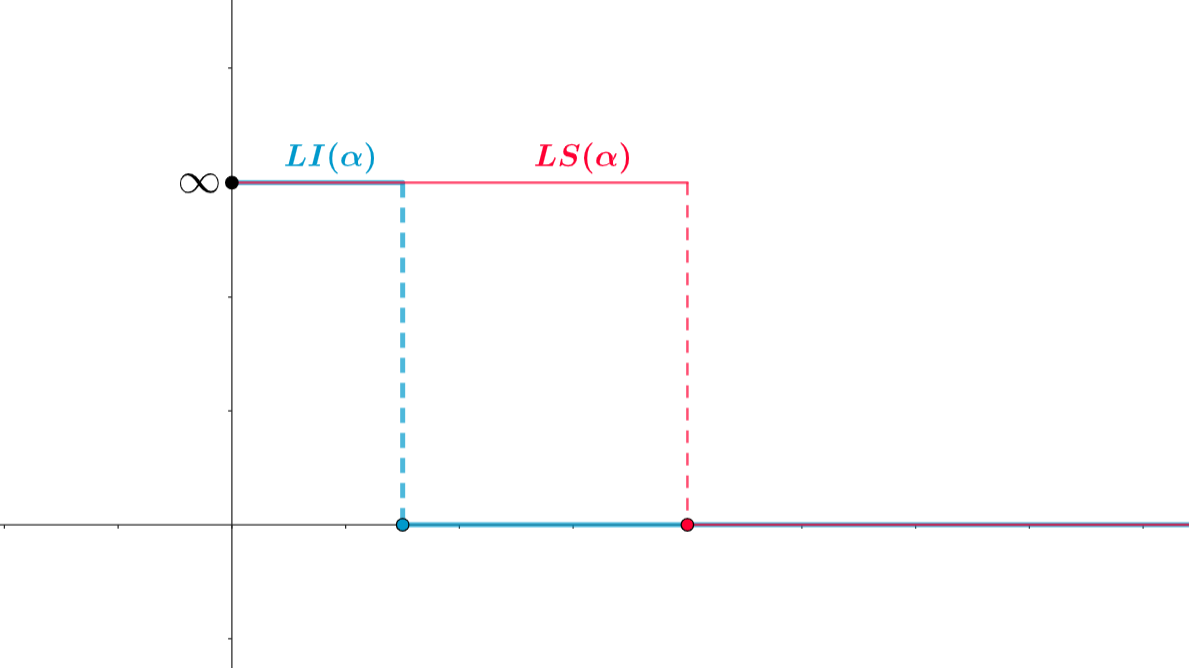}
    \caption{The graphs of the functions $LS$ and $LI$ for some function $f$.}
    \label{fig:limit-inferior-superior}
\end{figure}

In virtue of this lemma, the assumption that \eqref{eq:decay-condition} is false can be expressed as the existence of an $\alpha_0 > 0$ such that
\begin{equation*}
    \limsup_{\xi \rightarrow +\infty} \frac{\xi^{-\alpha_0}}{f(\xi)} = +\infty,
\end{equation*}
and we will be done if we prove that
\begin{equation*}
    \liminf_{\xi \rightarrow +\infty} \frac{\xi^{-\alpha_0}}{f(\xi)} = +\infty,
\end{equation*}

We will restate both these equalities by applying the change of variables $\xi = e^{\gamma}$ that we already used. Our hypothesis will now be that
\begin{equation*}
    + \infty =  \limsup_{\xi \rightarrow +\infty} \frac{\xi^{-\alpha_0}}{f(\xi)} = \limsup_{\gamma \rightarrow +\infty} \frac{e^{-\alpha_0 \gamma}}{e^{-\phi(\gamma)}} = \limsup_{\gamma \rightarrow +\infty} e^{-\alpha_0 \gamma + \phi(\gamma)},
\end{equation*}
or equivalently, that
\begin{equation}\label{eq:1-false-alternative}
    \limsup_{\gamma \rightarrow +\infty} -\alpha_0 \gamma + \phi(\gamma) = +\infty.
\end{equation}
Similarly, what we want to prove is that
\begin{equation}\label{eq:2-false-alternative}
    \liminf_{\gamma \rightarrow +\infty} -\alpha_0 \gamma + \phi(\gamma) = + \infty.
\end{equation}
In virtue of \eqref{eq:1-false-alternative}, pick $(\gamma_k)_k$ a sequence of positive numbers that sctrictly increases to $+\infty$ such that $-\alpha_0 \gamma_k +\phi(\gamma_k) \ge k$. Given any $\gamma \ge \gamma_1$, we can find $k \in \mathbb{N}, t \in [0,1]$ such that $\gamma = t\gamma_k + (1-t)\gamma_{k-1}$. We therefore have, as a consequence of the concavity of $\phi$, that
\begin{align*}
    -\alpha_0 \gamma +\phi(\gamma) &= -t\alpha_0 \gamma_k -(1-t)\alpha_0\gamma_{k+1}
    +\phi(t\gamma_k +(1-t)\gamma_{k+1}) \\
    &\ge t(-\alpha_0\gamma_k+\phi(\gamma_k)) +(1-t)(-\alpha_0 \gamma_{k+1} + \phi(\gamma_{k+1})) \ge k.
\end{align*}
Since this is true for every $\gamma \ge \gamma_1$, it follows that \eqref{eq:2-false-alternative} holds.
\end{proof}

An analogous result can be proved when $\phi$ is convex, but actually more can be said in this case.

\begin{theorem}\label{thm:when-phi-convex}
    Let $\phi$ be defined as in Theorem \ref{thm:equivalent-decay-conditions}. If $\phi$ is convex and not constant, then $f \notin \mathcal{F}(\mathbb{L})$. 
\end{theorem}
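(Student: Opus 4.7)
The plan is to reduce this to the characterization already established in Theorem \ref{thm:equivalent-decay-conditions}: it suffices to show that convexity of $\phi$ forces
\[
\liminf_{\gamma \to +\infty} \frac{\phi(\gamma)}{\gamma} > 0,
\]
which is condition \eqref{eq:phi-guarantees-outside} and is equivalent to \eqref{eq:decay-condition-2}. Once established, this yields $f \notin \mathcal{F}(\mathbb{L})$ directly.

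The argument exploits the monotonicity of chord slopes of a convex function. For any $\gamma_1 < \gamma_2 < \gamma$, convexity gives
\[
\phi(\gamma) \;\ge\; \phi(\gamma_2) + \frac{\phi(\gamma_2) - \phi(\gamma_1)}{\gamma_2 - \gamma_1}\,(\gamma - \gamma_2).
\]
In the relevant setting of a genuine decay (i.e.\ $\phi(\gamma) \to +\infty$, which is the only case of interest, since otherwise $f$ is bounded below by a positive constant and trivially lies in $\mathcal{F}(\mathbb{L})$ via any probability measure on $\mathbb{L}$), one selects $\gamma_1 < \gamma_2$ so that
\[
a \;:=\; \frac{\phi(\gamma_2) - \phi(\gamma_1)}{\gamma_2 - \gamma_1} \;>\; 0.
\]
Such a pair must exist: otherwise $\phi$ would be non-increasing, hence bounded above by $\phi(0)$, contradicting $\phi \to +\infty$. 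Substituting $a$ into the displayed chord inequality produces a linear minorant $\phi(\gamma) \ge a\gamma + c$ valid for every $\gamma \ge \gamma_2$, whence $\liminf_{\gamma \to +\infty} \phi(\gamma)/\gamma \ge a > 0$. Invoking Theorem \ref{thm:equivalent-decay-conditions} finishes the proof.

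The hardest part here is conceptual rather than technical: one must explicitly justify excluding the degenerate case of a bounded $\phi$ from the theorem's scope, because in that case the conclusion would actually fail. Apart from this single caveat, the rest of the argument is a mechanical application of the defining inequality of a convex function together with the equivalence already obtained in Theorem \ref{thm:equivalent-decay-conditions}, so the resulting proof should be very short.
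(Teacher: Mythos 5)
Your proof is correct and follows essentially the same route as the paper's: both use convexity to produce a linear minorant of $\phi$ with strictly positive slope (you via monotonicity of chord slopes, the paper via the superadditivity $\tilde{\phi}(\lambda\gamma)\ge\lambda\tilde{\phi}(\gamma)$ of the shifted function $\tilde{\phi}(\gamma)=\phi(\gamma+\gamma_0)-\phi(\gamma_0)$), which yields \eqref{eq:phi-guarantees-outside} and hence the conclusion by Theorem \ref{thm:equivalent-decay-conditions}. Your explicit exclusion of bounded $\phi$ is in fact more careful than the paper's treatment, which only discards the constant case and asserts that a convex function bounded below is increasing --- not literally true (e.g.\ $\phi(\gamma)=\log 2+e^{-\gamma}$, for which the theorem's conclusion genuinely fails), so the caveat you raise applies equally to the published argument.
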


\begin{proof}
    Again, assuming that $\| f \|_\infty \le \frac{1}{2} $ we have that $\phi(\gamma) \ge 0$. Notice that, since $\phi$ is convex and bounded below, it is increasing.
    
    Under all these assumptions define, for some $\gamma_0 > 0$ to be fixed later, $\tilde{\phi}(\gamma) = \phi(\gamma+\gamma_0)-\phi(\gamma_0)$. It is clear that $\tilde{\phi}$ is also convex and $\tilde{\phi}(0)=0$. As such, $\tilde{\phi}(\lambda \gamma) \ge \lambda \tilde{\phi}(\gamma)$ for every $\lambda \ge 1$ and $\gamma \ge 0$.

    We therefore have that
    \begin{align*}
        \liminf_{\gamma \rightarrow +\infty} \frac{\phi(\gamma)}{\gamma} &= \liminf_{\gamma \rightarrow +\infty } \frac{\phi(\gamma+\gamma_0)}{\gamma+\gamma_0} \\
        &= \liminf_{\gamma \rightarrow +\infty} \frac{\tilde{\phi}(\gamma)+\phi(\gamma_0)}{\gamma+\gamma_0} \\
        &\ge \liminf_{\gamma \rightarrow +\infty} \frac{\gamma \tilde{\phi}(1)+\phi(\gamma_0)}{\gamma+\gamma_0} = \tilde{\phi}(1) = \phi(1+\gamma_0)-\phi(\gamma_0),
    \end{align*}
    where in the inequality we assume $\gamma>1$. Since $\phi$ is increasing and not constant, we can choose $\gamma_0$ such that $\phi(1+\gamma_0)>\phi(\gamma_0)$. As such, \eqref{eq:phi-guarantees-outside} holds and $f \notin \mathcal{F}(\mathbb{L})$.
\end{proof}

The other benefit of $\phi$ is that it allows us to create functions that belong to $\mathcal{F}(\mathbb{L})$: simply pick a function $\phi(\gamma) = o(\gamma)$ and invert the construction, letting $f(\xi) = e^{- \phi(\log (\xi))}$. As it turns out, the slower that $\frac{\phi(\gamma)}{\gamma}$ decays to 0, the faster $f$ does, and as such we are able to construct functions in $\mathcal{F}(\mathbb{L})$, with arbitrarily fast decays inside of what is allowed. In a similar fashion $\phi$ can be used to construct functions which do not belong to $\mathcal{F}(\mathbb{L})$, or that belong to the gap.

\begin{center}
    \begin{table}[h!]
\label{table:phi-and-f}
    \begin{tabular}{|P{3cm}|P{7cm}|}
    \hline\xrowht[()]{20pt}
 $\phi(\gamma)$ & $f(\xi) $ \\ 
 \hline\xrowht[()]{20pt}
 $\alpha \gamma$ & $\xi^{-\alpha}\ \  (\notin \mathcal{F}(\mathbb{L}))$ \\ 
 \hline\xrowht[()]{20pt}
 $\log \gamma$ & $\displaystyle \frac{1}{\log \xi} \ \ (\in \mathcal{F}(\mathbb{L}))$ \\ 
 \hline\xrowht[()]{20pt}
 $\gamma^p$ & $\displaystyle e^{-(\log \xi)^p}\ \  (\in \mathcal{F}(\mathbb{L})$ iff $p<1)$\\ 
 \hline\xrowht[()]{20pt}
 $\displaystyle \frac{\gamma}{\log \gamma}$ & $ \displaystyle e^{-\frac{\log \xi}{\log \log \xi}} = \xi^{-\frac{1}{\log \log \xi}} \ \ (\in \mathcal{F}(\mathbb{L}))$\\ 
 \hline
    \end{tabular}
    
    \vspace{.5cm}
    
    \caption{Examples of functions $\phi$ and their correspondent decays $f$. }
    \end{table}
\end{center}

We now turn our attention to $\tau$. We have previously proved that the concavity or convexity of $\phi$ closes the gap, and we cited that Olsen used a similar hypothesis in a weaker version of Theorem \ref{thm:Olsen-Renfro}. We can prove the following result that resembles the effect of Olsen's hypothesis in Theorem 2 of \cite{ols05}. 
\begin{theorem}\label{thm:tau-disambiguates}
    Let $f$ be as in Theorem \ref{thm:fourier-set-liouville-decreasing}. If for some $\xi_0 >0$
    \begin{equation}\label{eq:tau-disambiguates-hypothesis}
        f(\xi^{\lambda}) \ge f(\xi)^{\lambda} \quad \forall \xi \ge \xi_0, \lambda \ge 1
    \end{equation}
    holds, then one and only one of the scenarios \eqref{eq:tau-guarantees-inside} and \eqref{eq:tau-guarantees-outside} occurs, and as such the membership of $f$ in $\mathcal{F}(\mathbb{L})$ is completely determined by the respective limits. 
\end{theorem}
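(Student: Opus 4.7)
The plan is to show that hypothesis \eqref{eq:tau-disambiguates-hypothesis} is equivalent to $\tau$ being eventually non-increasing, which immediately forces $\limsup \tau = \liminf \tau$ and thereby collapses the gap between \eqref{eq:tau-guarantees-inside} and \eqref{eq:tau-guarantees-outside}.

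First I would normalize as in the proof of Theorem \ref{thm:equivalent-decay-conditions}: assume WLOG that $\|f\|_\infty \le \tfrac{1}{2}$ (so that $\tau(\xi) > 0$ for all $\xi > 1$) and that $\xi_0 > 1$, the latter obtained by replacing $\xi_0$ with $\max(\xi_0, 2)$ if necessary. Applying $-\log$ to both sides of \eqref{eq:tau-disambiguates-hypothesis} (which reverses the inequality since $-\log$ is decreasing) yields $-\log f(\xi^\lambda) \le -\lambda \log f(\xi)$, and dividing by $\log(\xi^\lambda) = \lambda \log \xi > 0$ transforms the hypothesis into
\begin{equation*}
\tau(\xi^\lambda) \le \tau(\xi) \qquad \text{for all } \xi \ge \xi_0, \ \lambda \ge 1.
\end{equation*}
Since the map $\lambda \mapsto \xi^\lambda$ parametrizes $[\xi, \infty)$ bijectively as $\lambda$ ranges over $[1, \infty)$, this is exactly the statement that $\tau$ is non-increasing on $[\xi_0, \infty)$. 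Being also bounded below by $0$, the limit $\ell := \lim_{\xi \to \infty} \tau(\xi)$ then exists in $[0, \infty)$, and in particular $\limsup_{\xi \to \infty} \tau(\xi) = \liminf_{\xi \to \infty} \tau(\xi) = \ell$.

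The dichotomy follows at once: if $\ell = 0$, condition \eqref{eq:tau-guarantees-inside} is met and hence $f \in \mathcal{F}(\mathbb{L})$; if $\ell > 0$, condition \eqref{eq:tau-guarantees-outside} is met and $f \notin \mathcal{F}(\mathbb{L})$. Mutual exclusivity is automatic from $\limsup \ge \liminf$, which identifies these alternatives with $\ell = 0$ versus $\ell > 0$. The whole argument reduces to unpacking the hypothesis, and I don't anticipate any real technical obstacle — the only point worth being careful about is ensuring $\log \xi > 0$ and $\tau > 0$ so that the algebraic manipulations preserve direction of the inequality, which is exactly why the initial normalizations are invoked.
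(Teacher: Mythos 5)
Your proof is correct and follows essentially the same route as the paper: both arguments reduce the hypothesis to the statement that $\tau$ is eventually non-increasing (the paper takes $\xi_2 \ge \xi_1 > \max\{\xi_0,1\}$ and sets $\lambda = \log\xi_2/\log\xi_1$, which is just your reparametrization of $\lambda \mapsto \xi^\lambda$), and then both conclude that the resulting limit $\ell \ge 0$ forces exactly one of \eqref{eq:tau-guarantees-inside} or \eqref{eq:tau-guarantees-outside}. Your care about the normalizations $\|f\|_\infty \le \tfrac12$ and $\xi_0 > 1$ matches the paper's implicit assumptions.
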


\begin{proof}
Assume without loss of generality that $\| f\|_{\infty}< \frac{1}{2}$. 

Let $\xi_2 \ge \xi_1 > \max \{\xi_0,1 \}$. Apply \eqref{eq:tau-disambiguates-hypothesis} for $\xi = \xi_1$, $\lambda = \frac{\log \xi_2}{\log \xi_1}$. We get
    \begin{align*}
        f(\xi_1^{\frac{\log \xi_2}{\log \xi_1}}) &\ge f(\xi_1)^{\frac{\log \xi_2}{\log \xi_1}} \implies \\
        f(\xi_2) &\ge f(\xi_1)^{\frac{\log \xi_2}{\log \xi_1}} \implies \\
        \log (f (\xi_2)) &\ge \frac{\log \xi_2}{\log \xi_1} \log (f(\xi_1)) \implies \\
        -\frac{\log(f(\xi_2))}{\log(\xi_2)} &\le -\frac{\log(f(\xi_1))}{\log(\xi_1)} \implies \\
        \tau(\xi_2) &\le \tau(\xi_1).
    \end{align*}
Since $\tau$ is eventually decreasing, it has a limit at infinity. Since $\tau$ is eventually positive, the limit is either 0 or greater than 0. In the former case \eqref{eq:tau-guarantees-inside} holds, in the latter \eqref{eq:tau-guarantees-outside} holds.
\end{proof}

Note that a similar calculation can be made for functions for which \eqref{eq:tau-disambiguates-hypothesis} is satisfied with the opposite inequality. However, in this case we would get that $\tau$ is positive and eventually increasing, and as such its limit must be greater than 0. The result therefore is
\begin{theorem}\label{thm:tau-forces-out}
     Let $f$ be as in Theorem \ref{thm:fourier-set-liouville-decreasing}. If for some $\xi_0 >0$
    \begin{equation*}\label{eq:tau-forces-out-hypothesis}
        f(\xi^{\lambda}) \le f(\xi)^{\lambda} \quad \forall \xi \ge \xi_0, \lambda \ge 1
    \end{equation*}
    holds, then $f \notin \mathcal{F}(\mathbb{L})$.
\end{theorem}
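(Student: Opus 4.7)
The plan is to mimic the proof of Theorem \ref{thm:tau-disambiguates} with the inequality in the hypothesis reversed, and thereby obtain that $\tau$ is eventually \emph{non-decreasing} rather than non-increasing. Combined with the fact that $\tau$ is eventually strictly positive (which follows after a harmless normalization), this will force $\liminf_{\xi\to\infty}\tau(\xi)>0$, so by the equivalence in Theorem \ref{thm:equivalent-decay-conditions} we get $f\notin\mathcal{F}(\mathbb{L})$.

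Concretely, I would first assume without loss of generality that $\|f\|_\infty<\tfrac{1}{2}$, so that $-\log f(\xi)>0$ and therefore $\tau(\xi)>0$ for every $\xi>1$. Next, for any $\xi_2\ge\xi_1>\max\{\xi_0,1\}$, set $\lambda=\frac{\log\xi_2}{\log\xi_1}\ge 1$ and apply the hypothesis at $\xi=\xi_1$ to get
\begin{equation*}
 f(\xi_2)=f\bigl(\xi_1^{\log\xi_2/\log\xi_1}\bigr)\le f(\xi_1)^{\log\xi_2/\log\xi_1}.
\end{equation*}
Taking logarithms (both sides are in $(0,1)$, so both logarithms are negative), then dividing by $\log\xi_2>0$ and multiplying by $-1$, one obtains $\tau(\xi_2)\ge\tau(\xi_1)$. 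This is precisely the mirror image of the chain of implications used in the proof of Theorem \ref{thm:tau-disambiguates}; I would simply display the analogous four-line computation, taking care to track signs.

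With $\tau$ non-decreasing on $(\max\{\xi_0,1\},\infty)$, the limit $\lim_{\xi\to\infty}\tau(\xi)$ exists in $(0,+\infty]$, since it equals the supremum of $\tau$ on this range and each value there is strictly positive (by the normalization above). Hence $\liminf_{\xi\to\infty}\tau(\xi)>0$, i.e. condition \eqref{eq:tau-guarantees-outside} holds, which by Theorem \ref{thm:equivalent-decay-conditions} is equivalent to \eqref{eq:decay-condition-2} and thus gives $f\notin\mathcal{F}(\mathbb{L})$.

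There is essentially no significant obstacle in this proof; the one point requiring care is that monotonicity alone would only guarantee a nonnegative limit, so one must use the normalization $\|f\|_\infty<1$ (equivalently, $\tau$ strictly positive on its tail) to rule out a limit of $0$ and thereby land in the ``bad'' regime of Theorem \ref{thm:equivalent-decay-conditions}. This is why an analogue of the full dichotomy in Theorem \ref{thm:tau-disambiguates} is not needed here: the reversed hypothesis directly lands us in the ``outside'' alternative, and no convex/concave assumption is required.
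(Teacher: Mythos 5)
Your argument is exactly the one the paper intends: the paper proves this theorem only via the remark preceding it, namely that repeating the computation of Theorem \ref{thm:tau-disambiguates} with the reversed inequality shows $\tau$ is positive and eventually increasing, hence its limit is positive and \eqref{eq:tau-guarantees-outside} applies. Your mirrored four-line computation and the passage from monotonicity plus positivity to $\liminf_{\xi\to\infty}\tau(\xi)>0$ are correct.

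One step deserves scrutiny, and it is a gap you share with the paper: the reduction ``assume without loss of generality that $\|f\|_\infty<\frac{1}{2}$''. In Theorem \ref{thm:tau-disambiguates} this is harmless because replacing $f$ by $cf$ with $c<1$ preserves the hypothesis $f(\xi^{\lambda})\ge f(\xi)^{\lambda}$ (one uses $c\ge c^{\lambda}$ for $\lambda\ge 1$). For the reversed inequality the rescaling goes the wrong way: from $f(\xi^{\lambda})\le f(\xi)^{\lambda}$ one only gets $cf(\xi^{\lambda})\le c f(\xi)^{\lambda}$, whereas the required bound is $c^{\lambda}f(\xi)^{\lambda}$, which is smaller. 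So this normalization is not free, and the statement read literally fails without it: $f\equiv 2$ is bounded, continuous, and satisfies $f(\xi^{\lambda})=2\le 2^{\lambda}=f(\xi)^{\lambda}$, yet $f\in\mathcal{F}(\mathbb{L})$ trivially. The repair is easy and worth making explicit: your monotonicity computation $\tau(\xi_2)\ge\tau(\xi_1)$ uses only $\log\xi_i>0$ and needs no normalization at all; one then needs just a single $\xi_1$ in the tail with $f(\xi_1)<1$, i.e.\ $\tau(\xi_1)>0$, to conclude $\liminf_{\xi\to\infty}\tau(\xi)\ge\tau(\xi_1)>0$. Such a $\xi_1$ exists unless $f\ge 1$ on an entire tail, in which case $f\in\mathcal{F}(\mathbb{L})$ automatically and the theorem must be understood as excluding that degenerate case.
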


The other utility of $\tau$ is that, as with $\phi$, it allows us to reinterpret the functions $f$ in terms which make it easy to construct them with a desired decay at infinity. Recall the definition of $\tau$ and rearrange it to obtain $f(\xi) = \xi^{- \tau(\xi)}$. Note that the case of $\tau(\xi)$ being constant is of no interest, since we already know that power-like decays are not in $\mathcal{F}(\mathbb{L})$. Hence, allowing non constant exponents $\tau(\xi)$ will contribute to cover more interesting cases.

So if we pick any function $\tau$ which satisfies either \eqref{eq:tau-guarantees-inside} or \eqref{eq:tau-guarantees-outside}, we get a function $f$ whose membership in $\mathcal{F}(\mathbb{L})$ we understand. We can also choose a function $\tau$ such that
\begin{equation*}
    \liminf_{\xi \rightarrow +\infty} \tau(\xi) = 0 \quad \textnormal{and} \quad \limsup_{\xi \rightarrow + \infty } \tau(\xi) >0
\end{equation*}
and get a function whose behaviour is not explained by any of the results we have proved so far. For example, by letting $\tau(\xi) = |\cos(\pi \xi)|$ we get
\begin{equation*}
    f(\xi) = \xi^{- |\cos(\pi \xi)|},
\end{equation*}
a function in the gap generated by Theorem \ref{thm:fourier-set-liouville-decreasing}. We will be able to prove in Propositon  \ref{prop:simple-cases-in-out} that $f \notin \mathcal{F}(\mathbb{L})$, but that will require the study of a different aspect of the Liouville set, which we will tackle in the following section.

\section{Shift invariance and periodic multipliers}\label{sec:SI-PM}

In this section we present the proof of Theorem \ref{thm:multiplier-fourier-series-period1} which will turn out to be of great importance for identifying if a function in the gap is or is not in $\mathcal{F}(\mathbb{L})$. As mentioned in the introduction, this has to do with a key  property of $\mathbb{L}$ being invariant under integer translations.

Indeed, let $\mu$ be a measure supported on $\mathbb{L}$, and let $k \in \mathbb{Z}$. Then the measure $\mu_k$ defined by
    \begin{equation*}
        \mu_k(A) = \mu(A-k)
    \end{equation*}
    is also a measure supported on $\mathbb{L}$. In particular, we have that
    \begin{equation*}
        \widehat{\mu_k}(\xi) = e^{-2\pi i \xi k} \hat{\mu} (\xi)
    \end{equation*}
and $\mu_k$ is supported on $\mathbb{L}$. This allows for some interesting constructions. An immediate consequence of the definition above is that the measure $\nu$ defined as
\begin{equation*}
        \nu(E) = \frac{1}{2} \left(\mu_k(E) + \mu_{-k}(E)\right)
    \end{equation*} 
is supported on $\mathbb{L}$ and its Fourier transform is given by
\begin{equation*}
        \widehat{\nu}(\xi) = \cos(2\pi k \xi) \hat{\mu}(\xi).
    \end{equation*}

This simple yet crucial observation can be used to obtain Theorem \ref{thm:multiplier-fourier-series-period1}. We present here the proof.

\begin{proof}[Proof of Theorem \ref{thm:multiplier-fourier-series-period1}]
We begin by noticing that the hypotheses on $g$ imply that $g(0) > 0$. Identify the decay $f$ with some representative and let $\mu$ be a probability measure supported on $\mathbb{L}$ such that $|\hat{\mu}(\xi)| = O(f(|\xi|))$. Let $a_k\ge 0$ be the sequence of Fourier coefficients of $g$. Consider the partial sum
    \begin{equation*}
        S_N(\xi) = \sum_{k=0}^N a_k \cos(2k\pi \xi)
    \end{equation*}
    and define the measures
    \begin{equation*}
        \mu^{(N)}(E) = \sum_{k=0}^{N}  \frac{a_k}{2}(\mu_{k}(E)+\mu_{-k}(E)).
    \end{equation*}
    Then, $\mu^{(N)}$ is a measure supported on $\mathbb{L}$ as a consequence of the fact that $\mu$ is a measure supported on $\mathbb{L}$ and the fact that $a_k \ge 0$ for every $k$. Also by construction we get that \begin{equation*}
        \widehat{\mu^{(N)}}(\xi) = \left( \sum_{k=0}^{N} a_k \cos(2 \pi k \xi)\right) \widehat{\mu}(\xi).
    \end{equation*}
    Note that 
    \begin{equation*}
        0 < g(0) = \sum_{k=0}^{\infty} a_k  
    \end{equation*}
    and since $a_k \ge 0$, there is some $N_0$ such that $N \ge N_0$ implies
    \begin{equation*}
        \mu^{(N)}(\mathbb{L}) = \widehat{\mu^{(N)}}(0) = \sum_{k=0}^{N} a_k >0.
    \end{equation*}
As a consequence, $(\mu^{(N)})_N$ is a sequence of measures which are all finite and nonzero from some point onwards. 
    
Also $\widehat{\mu^{(N)}}$ are all (uniformly) continuous functions (they are Fourier transforms of finite measures) and for every $N>M$ we have that
\begin{align*}
\left|\widehat{\mu^{(N)}}(\xi)-\widehat{\mu^{(M)}}(\xi) \right| & = \left|\sum_{k=M+1}^{N} a_k \cos(2 \pi k \xi)\right| \left|\widehat{\mu}(\xi)\right|\\
        & \le \sum_{k=M+1}^{\infty} a_k \xrightarrow{M \rightarrow +\infty} 0.
\end{align*}
This implies that $(\widehat{\mu^{(N)}})_N$ is a Cauchy sequence with respect to $\| \|_{\infty}$. As a consequence, its limit is continuous. But hypothesis $(4)$ guarantees that $$\widehat{\mu^{(N)}}(\xi) \xrightarrow{N \rightarrow \infty} g(\xi) \hat{\mu}(\xi)$$  for every $\xi$. The Lévy Continuity Theorem therefore guarantees the existence of a finite measure $\mu^{(\infty)}$, supported on $\mathbb{L}$, which is the weak limit of $(\widehat{\mu^{(N)}})_N$ and for which 
    \begin{equation*}
        \widehat{\mu^{(\infty)}}(\xi) = g(\xi) \hat{\mu}(\xi).
    \end{equation*}
    Note that $\mu^{(\infty)}$ is not the zero measure since $\widehat{\mu^{(\infty)}}(0) = g(0) \hat{\mu}(0) >0$.
The result follows from the fact that $|\hat{\mu}(\xi)|=O(f(|\xi|))$.

\end{proof}

Now that the proof is done, we would like to point out some things about the hypotheses of Theorem \ref{thm:multiplier-fourier-series-period1}. First, recall that a sufficient condition so that hypothesis $(4)$ holds is that $g \in L^1([-\frac{1}{2},\frac{1}{2}])$ and is everywhere differentiable. This gives us a big family of functions to draw from. Second, condition $(3)$ is clearly necessary to carry out the proof since we need to construct a sequence of measures to obtain the desired limit measure.

Now that we know that functions $f$ in $\mathcal{F} (\mathbb{L})$ can be multiplied by certain periodic functions $|g|$ to obtain functions $|g|f$ still in $\mathcal{F} (\mathbb{L})$, we would like to generate useful examples of functions $g$. The following result will guarantee the existence of bump functions that satisfy the hypotheses of Theorem \ref{thm:multiplier-fourier-series-period1}.

\begin{proposition}\label{prop:existence-good-bumps-period1}
Let $\gamma:[-\frac{1}{2}, \frac{1}{2}] \longrightarrow \mathbb{R}$ be a bump function satisfying the following hypotheses:
\begin{enumerate}[(i)]
    \item $\gamma$ is even.
    \item $\gamma \in C^1[-\frac{1}{2},\frac{1}{2}]$.
    \item $\gamma(0)=1, 0 \le \gamma(t) \le 1 \ \forall \ t \in [-\frac{1}{2}, \frac{1}{2}]$.
    \item supp($\gamma$) $\subset [-\delta,\delta] $ for some $\delta < \frac{1}{4}$.
    
\end{enumerate}
Then the function
\begin{equation*}
    g = \frac{\gamma \ast \gamma}{\| \gamma\|_2^2}
\end{equation*}
is also a bump function satisfying hypotheses $(i)-(iii)$ but also
\begin{enumerate}
    \item[(iv)] supp($g$) $\subset [-2\delta,2\delta] $.
    \item[(v)] When $g$ is extended to $\mathbb{R}$ as a $1$-periodic function, all its Fourier coefficients are nonnegative. 
\end{enumerate}
    
\end{proposition}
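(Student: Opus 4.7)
The plan is to verify the six conclusions in turn. Properties (i)--(v) are quick consequences of standard convolution facts, and (vi) is the main point.

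For (i), the convolution of two even functions is even. For (ii), since $\gamma$ is $C^1$ and compactly supported in the interior of $[-1/2,1/2]$ (so it extends to a $C^1$ function on all of $\mathbb{R}$), the identity $(\gamma \ast \gamma)' = \gamma' \ast \gamma$ gives $\gamma \ast \gamma \in C^1(\mathbb{R})$. For (iii), evaluating at $0$ and using evenness yields $(\gamma \ast \gamma)(0) = \int \gamma(s)\gamma(-s)\,ds = \|\gamma\|_2^2$, so $g(0) = 1$; nonnegativity of $g$ follows from $\gamma \ge 0$; and the upper bound $g \le 1$ follows from Cauchy--Schwarz,
\begin{equation*}
(\gamma \ast \gamma)(t) = \int \gamma(s)\gamma(t-s)\,ds \le \|\gamma\|_2 \, \|\gamma(t-\cdot)\|_2 = \|\gamma\|_2^2.
\end{equation*}
For (iv), the standard inclusion $\operatorname{supp}(\gamma \ast \gamma) \subset \operatorname{supp}(\gamma) + \operatorname{supp}(\gamma) \subset [-2\delta,2\delta]$ applies. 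For (v), since $2\delta < 1/2$ the support of $g$ lies inside $[-1/2,1/2]$, so Fubini gives
\begin{equation*}
\int_{-1/2}^{1/2} g(t)\,dt = \int_{\mathbb{R}} g(t)\,dt = \frac{1}{\|\gamma\|_2^2}\left(\int \gamma\right)^{\!2} = \frac{1}{\|\gamma\|_2^2}.
\end{equation*}

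The main step is (vi). Because $\operatorname{supp}(g) \subset [-2\delta,2\delta] \subset [-1/2,1/2]$, the $n$-th Fourier coefficient of the $1$-periodic extension of $g$ coincides with the Euclidean Fourier transform of $g$ evaluated at $n$:
\begin{equation*}
\widehat{g}_{\mathrm{per}}(n) = \int_{-1/2}^{1/2} g(t) e^{-2\pi i n t}\,dt = \int_{\mathbb{R}} g(t) e^{-2\pi i n t}\,dt = \widehat{g}(n).
\end{equation*}
By the convolution theorem together with the fact that $\gamma$ is real-valued and even (so $\widehat{\gamma}$ is real-valued),
\begin{equation*}
\widehat{g}(\xi) = \frac{\widehat{\gamma \ast \gamma}(\xi)}{\|\gamma\|_2^2} = \frac{\widehat{\gamma}(\xi)^2}{\|\gamma\|_2^2} \ge 0 \qquad \text{for every } \xi \in \mathbb{R},
\end{equation*}
and in particular at every integer. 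Since $g$ is also even and real-valued, its periodic Fourier series reduces to a pure cosine series whose coefficients are nonnegative multiples of the $\widehat{g}(n)$, yielding (vi).

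The only place where care is required is (vi): one must keep the Fourier conventions straight when identifying the Fourier coefficients of the periodic extension with the values of the Fourier transform on $\mathbb{R}$ at integer points. Once that identification is in place, the nonnegativity is the classical ``autocorrelation has nonnegative power spectrum'' observation, made possible precisely by defining $g$ as a self-convolution of a real even bump.
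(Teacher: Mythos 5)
Your proof is correct and follows essentially the same route as the paper's: each of (i)--(v) is checked by the same standard convolution facts, and (vi) rests on the identity $\widehat{g}(n)=\widehat{\gamma}(n)^2/\|\gamma\|_2^2$, which the paper derives by hand via the cosine addition formula while you invoke the convolution theorem together with the (correctly justified) identification of the periodic Fourier coefficients with the Euclidean transform at integer points, valid since $\operatorname{supp}(g)\subset[-2\delta,2\delta]\subset(-\tfrac12,\tfrac12)$. A minor bonus of your write-up is the Cauchy--Schwarz argument for $g\le 1$, which the paper leaves implicit.
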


\begin{proof}
Let us start by mentioning that the existence of a function $\gamma$ satisfying the hypotheses $(i)$ to $(iv)$ is an elementary and well known fact.
Let us now check all five conditions for $g$. To check $(i)$ notice that
    \begin{align*}
        g(-x) &= \frac{1}{\left\| \gamma \right\|_2^2} \int_{-1/2}^{1/2} \gamma(t) \gamma(-x-t) dt \\
        &=  \frac{1}{\left\| \gamma \right\|_2^2} \int_{-1/2}^{1/2} \gamma(-t) \gamma(-x+t) dt \\
        &=  \frac{1}{\left\| \gamma \right\|_2^2} \int_{-1/2}^{1/2} \gamma(t) \gamma(x-t) dt = g(x),
    \end{align*}
where in the second equality we have applied the change of variables $t \mapsto -t$, and in the third equality we have used the evenness of $\gamma$.

Condition $(ii)$ is a known result on the preservation of continuity and differentiability through convolutions. $(iii)$ follows directly from the evenness of $\gamma$:
\begin{equation*}
    g(0) = \frac{1}{\| \gamma \|_2^2} \int_{-1/2}^{1/2} \gamma(t) \gamma(-t) dt = 1.
\end{equation*}

$(iv)$ can be obtained as follows:
\begin{align*}
    g(x)>0 &\implies \int_{-1/2}^{1/2} \gamma(t) \gamma(x-t) dt >0 \\
    &\implies \exists \ t_0 \in [-\delta, \delta]/ \ \gamma(t_0)>0 \textnormal{\ and \ } \gamma(x-t_0) > 0 \\
    &\implies x-t_0 \in [-\delta,\delta] \implies x \in [-2\delta,2\delta].
\end{align*}

Finally, the proof of $(v)$ begins with the following calculation:
\begin{equation}\label{eq:first-calculation-proof-vi}
    \| \gamma \|_2^2 . \widehat{g}(n) = 
   \| \gamma \|_2^2 \int_{-1/2}^{1/2} g(x) \cos(nx)dx = 
    \int_{-1/2}^{1/2} \int_{-1/2}^{1/2} \gamma(x-t) \gamma(t) \cos(nx) dt dx.
\end{equation}
Now we note that
\begin{equation*}
    \cos(nx) = \cos(n(x-t)+nt) = \cos(n(x-t))\cos(nt) - \sin(n(x-t))\sin(nt).
\end{equation*}
We can use this, alongside Fubini's theorem, to rewrite \eqref{eq:first-calculation-proof-vi} as
\begin{align*}
     \| \gamma \|_2^2 . \widehat{g}(n) &= \int_{-1/2}^{1/2} \int_{-1/2}^{1/2} \gamma(x-t) \gamma(t) \cos(n(x-t))\cos(nt) dx dt \\
     &= \int_{-1/2}^{1/2} \gamma(t) \cos(nt) \left[ \int_{-1/2}^{1/2} \gamma(x-t)\cos(n(x-t)) dx \right] dt = \widehat{\gamma}(n)^2 \ge 0.
\end{align*}
We can omit the term involving the sines from the calculation, since it will devolve into the integral of an odd function over $[-\frac{1}{2},\frac{1}{2}]$. 

\end{proof}

We can use these tools to tackle the problem that remained from the previous section: deciding whether a function in the gap left by Theorem \ref{thm:fourier-set-liouville-decreasing} belongs to $\mathcal{F}(\mathbb{L})$. Everything that we have developed in this section will serve to analyze the two examples described in the introduction in equations \eqref{eq:example-cos/log} and \eqref{eq:example-xi^cos}.

\begin{proposition}\label{prop:simple-cases-in-out}
\mbox{}
\begin{enumerate}
    \item[i)] Let $h(\xi) = \frac{|\cos(2 \pi \xi)|}{\log(\xi)}$. Then 
$h \in \mathcal{F}(\mathbb{L})$. 
\item[ii)] Let $f(\xi) = \xi^{-|\cos(\pi \xi)|}$. Then 
$f \notin \mathcal{F}(\mathbb{L})$. 
\end{enumerate}
\end{proposition}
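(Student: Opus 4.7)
The plan is to settle both statements using the periodic multiplier machinery of Theorem \ref{thm:multiplier-fourier-series-period1}: part (i) as a direct positive application, and part (ii) as a proof by contradiction that uses the Salem-type bound $\dim_F(\mathbb{L}) \le \dim(\mathbb{L}) = 0$ (equivalently, Theorem \ref{thm:fourier-set-liouville-decreasing}(ii)).

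For (i), I would take $f(\xi) = 1/\log \xi$, which belongs to $\mathcal{F}(\mathbb{L})$ by Theorem \ref{thm:fourier-set-liouville-decreasing}(i), and choose the multiplier $g(\xi) = \cos(2\pi \xi)$. The four hypotheses of Theorem \ref{thm:multiplier-fourier-series-period1} are immediate for this $g$: it is $1$-periodic, $g(0) = 1 \neq 0$, already even, and its cosine Fourier series is simply $g$ itself, with the single coefficient $a_1 = 1 \ge 0$ (all others vanishing), so the partial sums $S_N \equiv g$ for $N \ge 1$ converge pointwise. The theorem then yields $|g|\,f = |\cos(2\pi\xi)|/\log \xi = h \in \mathcal{F}(\mathbb{L})$.

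For (ii), assume for contradiction that $f(\xi) = \xi^{-|\cos(\pi \xi)|} \in \mathcal{F}(\mathbb{L})$. The idea is to build a periodic multiplier that kills the oscillation of $f$, leaving an honest power-like decay that violates the dimension bound. Using Proposition \ref{prop:existence-good-bumps-period1} with, say, $\delta = 1/8$, construct a $1$-periodic, even, nonnegative, $C^1$ bump $g$ satisfying all four hypotheses of Theorem \ref{thm:multiplier-fourier-series-period1}, with support on $[-1/2, 1/2]$ contained in $[-1/4, 1/4]$ (pointwise convergence of the Fourier series comes for free from $g \in C^1$). Since $|\cos(\pi \xi)|$ is $1$-periodic, on the periodic support $\bigcup_{n \in \mathbb{Z}}(n + [-1/4, 1/4])$ of $g$ we have $|\cos(\pi \xi)| = |\cos(\pi(\xi - n))| \ge \cos(\pi/4) = \sqrt{2}/2$, so $f(\xi) \le \xi^{-\sqrt{2}/2}$ there, while $|g|\,f \equiv 0$ off this set. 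This gives a uniform bound $|g(\xi)|f(\xi) \le \|g\|_\infty\, \xi^{-\sqrt{2}/2}$ for all sufficiently large $\xi$. Theorem \ref{thm:multiplier-fourier-series-period1} then forces $|g|f \in \mathcal{F}(\mathbb{L})$, so there is some $\nu \in \mathcal{P}(\mathbb{L})$ with $|\hat{\nu}(\xi)| \lesssim \xi^{-\sqrt{2}/2}$. This contradicts Theorem \ref{thm:fourier-set-liouville-decreasing}(ii) (or directly $\dim_F(\mathbb{L}) = 0$), completing the argument.

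The only delicate step is the choice of multiplier in part (ii): it must be simultaneously $1$-periodic, admit a nonnegative cosine Fourier expansion, and have its periodic support concentrated near the integers so that $|\cos(\pi\xi)|$ is bounded away from $0$ throughout that support. Proposition \ref{prop:existence-good-bumps-period1} was designed precisely to deliver such bumps, so the real work is already done upstream; once the appropriate $g$ is chosen, the two chained applications of Theorems \ref{thm:multiplier-fourier-series-period1} and \ref{thm:fourier-set-liouville-decreasing} close the proof. Part (i) carries no genuine obstacle beyond observing that $g(\xi) = \cos(2\pi\xi)$ is already a legitimate multiplier in the sense of Theorem \ref{thm:multiplier-fourier-series-period1}.
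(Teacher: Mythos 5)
Your proposal is correct and follows essentially the same route as the paper: part (i) is the two-point translation trick (the paper writes it directly as $\nu=\tfrac12(\mu_1+\mu_{-1})$ rather than formally invoking Theorem \ref{thm:multiplier-fourier-series-period1}, but it is the same construction), and part (ii) is the paper's argument verbatim up to the choice of support radius ($1/4$ giving exponent $\sqrt{2}/2$ in place of the paper's $1/3$ and $1/2$), both yielding a power-law decay that contradicts $\dim_F(\mathbb{L})=0$.
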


\begin{figure}[ht]
\centering
    \includegraphics[width=\textwidth]{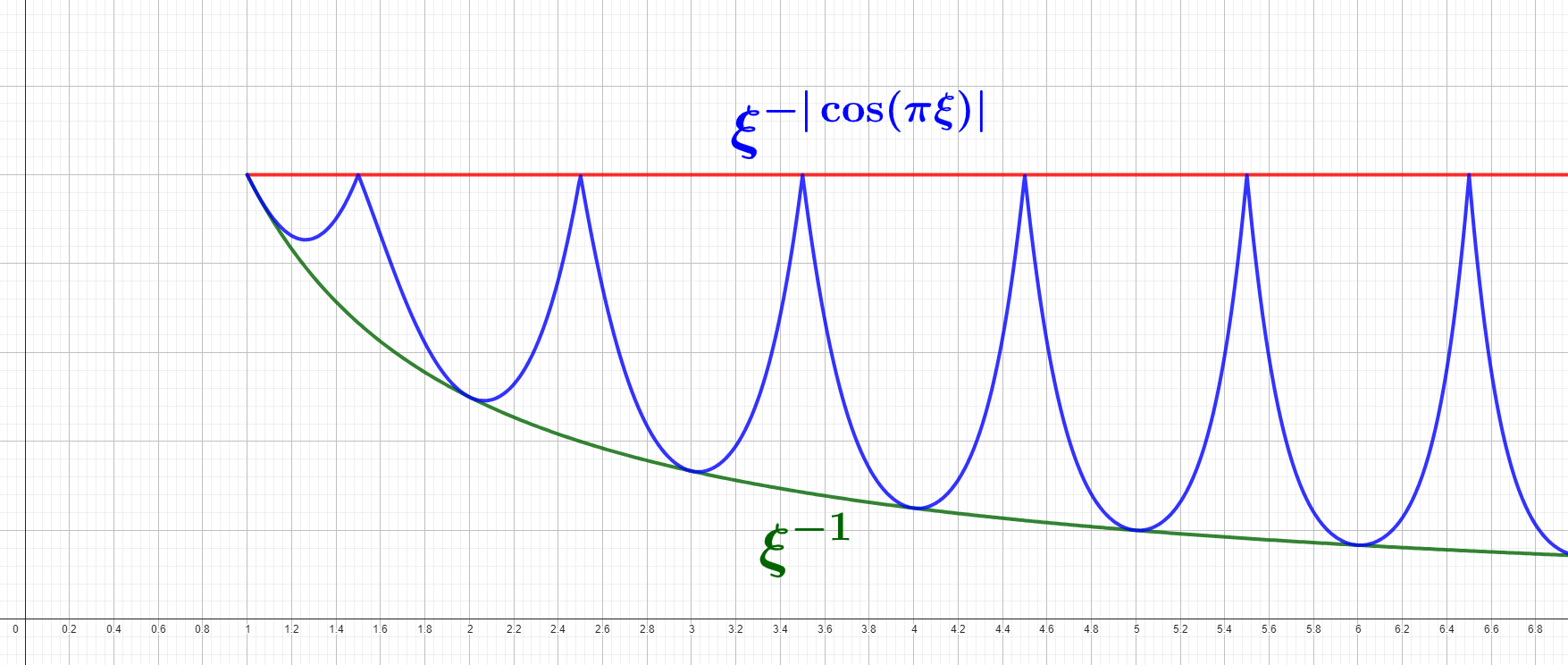}
    \caption{The graph of $f(\xi) = \xi^{-|\cos(\pi \xi)|}$ and the two functions between which it fluctuates.}
    \label{fig:gap-example1}
\end{figure}

\begin{proof}
Let us start by proving $i)$. We start by choosing a measure $\mu$ supported on $\mathbb{L}$ such that 
$$
|\widehat{\mu}(\xi)|=O\left(\log(\xi)^{-1}\right).
$$
Now, define the measure $\nu$ as
\begin{equation*}
        \nu(E) = \frac{1}{2} \left(\mu_1(E) + \mu_{-1}(E)\right).
    \end{equation*} 
By the translation invariance of $\mathbb{L}$, we know that $\nu$ is a measure supported on it. Also, its Fourier transform is given by
\begin{equation*}
        \widehat{\nu}(\xi) = \cos(2\pi\xi) \hat{\mu}(\xi)=O\left(\cos(2\pi\xi)\log(\xi)^{-1}\right),
\end{equation*}
which yields the desired conclusion.

To prove $ii)$, suppose that indeed $f \in \mathcal{F}(\mathbb{L})$. 
The graph of $f$ intersects that of $\xi^{-1/2}$ at the points whose $\xi$ coordinate satisfies the equation
 
\begin{equation*}
        \xi^{-|\cos(\pi \xi)|} = \xi^{-1/2}.
    \end{equation*}
    A straightforward calculation shows that these coordinates are precisely those of the form $\xi = k \pm \frac{1}{3}, k \in \mathbb{N}$.

       \begin{figure}[ht]
\centering
    \includegraphics[width=\textwidth]{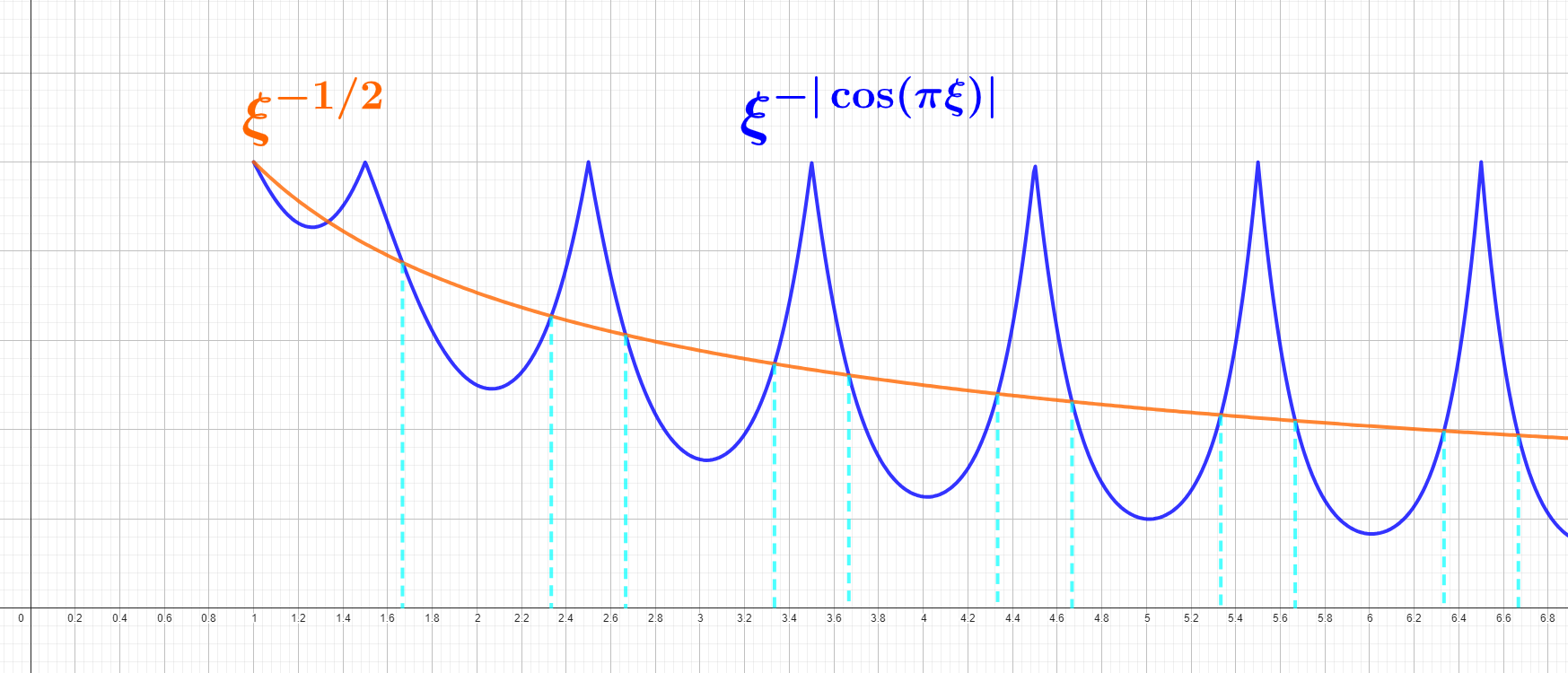}
    \caption{The graph of $f$. The intervals of radius $\frac{1}{3}$ surrounding the integers determine subsets of the domain where $f$ is bounded by $\xi^{-1/2}$.}
    \label{fig:gap-example2}
\end{figure}

Let us start by picking a 1-periodic bump function $g \in C^1(\mathbb{R})$ as in Theorem \ref{thm:multiplier-fourier-series-period1} satisfying $\textnormal{supp}(g) \cap [-\frac{1}{2}, \frac{1}{2}] \subset [-\frac{1}{3},\frac{1}{3}] $. It is clear that $gf(\xi) \le \xi^{-1/2}$ for $\xi \notin \textnormal{supp}(g)$. If on the other hand $\xi \in \textnormal{supp}(g)$, because we have constructed $g$ to be supported where $f$ is bounded by $\xi^{-1/2}$, and because $g\le 1$ everywhere, we have that $gf(\xi) \le \xi^{-1/2}$ for every $\xi \in \textnormal{supp}(g)$. All in all, we get that $gf(\xi) \le \xi^{-1/2} $. Our assumption on $f$ together with Theorem \ref{thm:multiplier-fourier-series-period1} implies that $gf \in \mathcal{F}(\mathbb{L})$ but this in turn implies that $\xi^{-1/2} \in \mathcal{F}(\mathbb{L})$, which is clearly false and thus shows the contradiction. 

\end{proof}

While the proof of $(ii)$ is extremely tailored, it does shed some light on the reasons why a function in the gap might fail to belong to $\mathcal{F}(\mathbb{L})$. There are two key ingredients in this proof: the first is that points of fast decay (in this case, where $f$ behaves like $\xi^{-1}$) appear with linear periodicity (that is, they are located on $\mathbb{N}$, each separated from the previous one by 1). The second ingredient is a bit more subtle. Note that by looking at intervals of radius $\frac{1}{3}$ around \textbf{each} natural number we get the desired bound, but it was mandatory that the interval had the same radius for each natural number. The proof worked because the behaviour around each ``valley'' is similar regardless of which valley we are at. It would not work if the valleys were increasingly narrow, because we would need to take smaller intervals as we advance on $\mathbb{N}$, which is incompatible with the periodicity of $g$.

We will now attempt to generalize Proposition \ref{prop:simple-cases-in-out}. This will still be far from closing the gap, but we will at least get close to formalizing the ideas from the previous paragraph.

Let $a>0$ be some real number. Define the set $\mathbb{L}_a = \mathbb{L} + \frac{1}{a}\mathbb{Z}$. Since $\mathbb{L}_a$ is the union of a countable number of copies of $\mathbb{L}$, $\dim(\mathbb{L}_a)=0$. Also, it is clear from the definition that $\mathbb{L}_a$ is invariant under translations by numbers of the form $\frac{m}{a}, m \in \mathbb{Z}$.

We can therefore proceed in similar fashion to what we have done throughout this section and prove the following results. We state them without proof since they are entirely analogous to the ones that prove the results for $\mathbb{L}$.

\begin{proposition}\label{prop:multiplier-fourier-series-period-a}
    Let $f \in \mathcal{F}(\mathbb{L}_a)$. Let $g: \mathbb{R} \rightarrow \mathbb{R}$ be a function which satisfies the following conditions:
    \begin{enumerate}[(1)]
        \item $g$ is even and $a$-periodic.
        \item $g$ is not identically equal to 0.
        \item $g$ has an $a$-periodic Fourier series $S_N(\xi)$ with nonnegative coefficients.
        \item $S_N$ converges to $g$ pointwise.
    \end{enumerate}
    Then $|g|f \in \mathcal{F}(\mathbb{L}_a)$.
\end{proposition}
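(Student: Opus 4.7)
The plan is to adapt the proof of Theorem \ref{thm:multiplier-fourier-series-period1} almost verbatim, replacing the integer translations that leave $\mathbb{L}$ invariant with translations by $\tfrac{m}{a}$, $m \in \mathbb{Z}$, which leave $\mathbb{L}_a$ invariant, and replacing the $1$-periodic Fourier basis $\{\cos(2\pi k \xi)\}_k$ with the $a$-periodic basis $\{\cos(2\pi k \xi / a)\}_k$.

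More concretely, I would start by assuming (without loss of generality) that $g(0)>0$ and picking a probability measure $\mu$ supported on $\mathbb{L}_a$ with $|\widehat{\mu}(\xi)| = O(f(|\xi|))$, which exists by the hypothesis $f \in \mathcal{F}(\mathbb{L}_a)$. For each $m \in \mathbb{Z}$, set $\mu_{m/a}(E) = \mu(E - m/a)$; by the invariance of $\mathbb{L}_a$ under translations by $\tfrac{1}{a}\mathbb{Z}$, each $\mu_{m/a}$ is supported on $\mathbb{L}_a$, and a direct computation gives $\widehat{\mu_{m/a}}(\xi) = e^{-2\pi i \xi m/a}\widehat{\mu}(\xi)$. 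Letting $(a_k)_{k \ge 0}$ be the (nonnegative) Fourier coefficients of the even extension of $g$, with $S_N \to g$ pointwise, define
\begin{equation*}
    \mu^{(N)}(E) = \sum_{k=0}^{N} \frac{a_k}{2}\bigl(\mu_{k/a}(E) + \mu_{-k/a}(E)\bigr),
\end{equation*}
so that $\mu^{(N)}$ is a nonnegative combination of measures supported on $\mathbb{L}_a$ and $\widehat{\mu^{(N)}}(\xi) = S_N(\xi)\widehat{\mu}(\xi)$.

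Next, I would verify exactly as before that $\mu^{(N)}(\mathbb{L}_a) = S_N(0) = \sum_{k=0}^N a_k \to g(0) > 0$, so the measures are nonzero from some $N_0$ on; and that $\bigl|\widehat{\mu^{(N)}}(\xi) - \widehat{\mu^{(M)}}(\xi)\bigr| \le \|\widehat{\mu}\|_\infty \sum_{k=M+1}^\infty a_k$, which is a tail of a convergent series, giving a Cauchy sequence in $\|\cdot\|_\infty$. Applying Lévy's continuity theorem produces a finite measure $\mu^{(\infty)}$ which is the weak limit of $(\mu^{(N)})_N$, and the pointwise convergence $S_N \to g$ identifies $\widehat{\mu^{(\infty)}}(\xi) = g(\xi)\widehat{\mu}(\xi)$, so $|g|f \in \mathcal{F}(\mathbb{L}_a)$ once we verify the support.

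The main (and essentially only) subtlety — also tacitly present in the proof of Theorem \ref{thm:multiplier-fourier-series-period1} — is to argue that the weak limit $\mu^{(\infty)}$ is supported on $\mathbb{L}_a$. Since $\mathbb{L}$ is a $G_\delta$ set (the standard countable intersection of open sets from its definition in \eqref{def:Liouville}), $\mathbb{L}_a = \mathbb{L} + \tfrac{1}{a}\mathbb{Z}$ is also a $G_\delta$, so its complement is an $F_\sigma$, say $\mathbb{L}_a^c = \bigcup_n F_n$ with each $F_n$ closed. Each $\mu^{(N)}$ vanishes on $F_n$, so by the Portmanteau theorem $\mu^{(\infty)}(F_n) \le \liminf_N \mu^{(N)}(F_n) = 0$, and summing over $n$ yields $\mu^{(\infty)}(\mathbb{L}_a^c) = 0$. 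This closes the argument.
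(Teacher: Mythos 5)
Your overall strategy is exactly the one the paper intends: the proposition is stated there without proof precisely because it is obtained from Theorem \ref{thm:multiplier-fourier-series-period1} by replacing integer translations with translations by $\tfrac{1}{a}\mathbb{Z}$ and the $1$-periodic cosines with $a$-periodic ones. Your construction of $\mu^{(N)}$, the computation $\widehat{\mu^{(N)}}=S_N\widehat{\mu}$, the positivity of the total mass for large $N$, the Cauchy estimate via the tail $\sum_{k>M}a_k$, and the identification of the limit transform as $g\widehat{\mu}$ are all correct and match the paper's argument.

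The one step that fails as written is the support argument, which you yourself single out as the main subtlety. The Portmanteau theorem gives, for a \emph{closed} set $F$, the inequality $\limsup_N \mu^{(N)}(F) \le \mu^{(\infty)}(F)$ --- the opposite direction from the one you invoke. Knowing that $\mu^{(N)}(F_n)=0$ for every $N$ therefore yields only $0\le \mu^{(\infty)}(F_n)$, which is vacuous (compare $\delta_{1/N}\Rightarrow\delta_0$: each $\delta_{1/N}$ vanishes on the closed set $\{0\}$, yet the limit puts all its mass there). Weak convergence alone genuinely cannot prevent mass from escaping $\mathbb{L}_a$ in the limit, so the step needs a different justification. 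The fix is immediate: since $a_k\ge 0$ and $\sum_k a_k=g(0)<\infty$, the series $\sum_{k\ge 0}\tfrac{a_k}{2}\bigl(\mu_{k/a}+\mu_{-k/a}\bigr)$ converges in total variation (equivalently, the partial sums increase setwise), so $\mu^{(\infty)}$ is literally this countable sum of measures, each of which assigns zero mass to the Borel set $\mathbb{L}_a^{c}$; hence $\mu^{(\infty)}(\mathbb{L}_a^{c})=0$. With that replacement your proof is complete; the same remark applies to the proof of Theorem \ref{thm:multiplier-fourier-series-period1}, where the support of the weak limit is asserted without comment.
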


\begin{proposition}\label{prop:existence-good-bumps-period-a}
There exists a bump function $g: [-\frac{a}{2},\frac{a}{2}] \longrightarrow \mathbb{R}$ for which the following properties hold:
\begin{enumerate}[(i)]
    \item $g$ is even.
    \item $g \in C^1[-\frac{a}{2},\frac{a}{2}]$.
    \item $g(0)=1, 0 \le g(t) \le 1 \ \forall \ t \in [-\frac{a}{2}, \frac{a}{2}]$.
    \item supp($g$) $\subset [-\delta,\delta] $ for some $\delta < \frac{a}{2}$.
    \item When $g$ is extended to $\mathbb{R}$ as an $a$-periodic function, all its Fourier coefficients are nonnegative.
\end{enumerate}
\end{proposition}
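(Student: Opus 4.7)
The plan is to adapt the construction in the proof of Proposition \ref{prop:existence-good-bumps-period1} to period $a$. I would take $\gamma:[-a/2,a/2]\to\mathbb R$ even, nonnegative, $C^1$, and supported in $[-\delta_1,\delta_1]$ with $\delta_1<a/4$, and define $g:=(\gamma\ast\gamma)/\|\gamma\|_2^2$. Properties (i), (ii), (iii), and (v) would then follow by rerunning the computations of Proposition \ref{prop:existence-good-bumps-period1}: evenness from the substitution $t\mapsto -t$ combined with the evenness of $\gamma$; $C^1$-regularity from the smoothing effect of convolutions; $g(0)=\|\gamma\|_2^2/\|\gamma\|_2^2=1$; the pointwise upper bound $g\le 1$ via Cauchy--Schwarz applied to $(\gamma\ast\gamma)(x)=\int\gamma(t)\gamma(x-t)\,dt$; and $\mathrm{supp}(g)\subset[-2\delta_1,2\delta_1]$, giving (v) with $\delta:=2\delta_1<a/2$.

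For (vi), I would use that $\mathrm{supp}(\gamma\ast\gamma)\subset[-a/2,a/2]$ lies inside a single period, so the integral defining the $n$-th Fourier coefficient of the $a$-periodic extension of $g$ extends to an integral over all of $\mathbb R$:
\begin{equation*}
\widehat g(n)=\frac{1}{a\|\gamma\|_2^2}\int_{\mathbb R}(\gamma\ast\gamma)(t)\,e^{-2\pi i nt/a}\,dt=\frac{\widehat\gamma(n/a)^2}{a\|\gamma\|_2^2}\ge 0,
\end{equation*}
where nonnegativity is forced by $\gamma$ being real and even (so $\widehat\gamma$ is real-valued). This is the period-$a$ counterpart of the Fubini-plus-cosine-identity calculation at the end of the proof of Proposition \ref{prop:existence-good-bumps-period1}.

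Property (iv) is the step I expect to require the most care. The same Fubini argument produces
\begin{equation*}
\int_{-a/2}^{a/2}g(t)\,dt=\frac{\bigl(\int\gamma\bigr)^2}{\|\gamma\|_2^2},
\end{equation*}
and my plan for forcing this past $1$ is to take $\gamma$ to be a $C^1$-mollification of $\chi_{[-\delta_1,\delta_1]}$ at a small scale $\varepsilon\ll\delta_1$: then $\int\gamma=2\delta_1+O(\varepsilon)$ and $\|\gamma\|_2^2=2\delta_1+O(\varepsilon)$, so $\int g\to 2\delta_1$ as $\varepsilon\to 0$. Taking $\delta_1$ sufficiently close to $a/4$ from below and $\varepsilon$ sufficiently small then sends $\int g$ arbitrarily close to $a/2$, delivering (iv). The main obstacle is the quantitative bookkeeping that shows the endpoint mollification does not inflate $\|\gamma\|_2^2$ enough to destroy the lower bound on the ratio $(\int\gamma)^2/\|\gamma\|_2^2$; this reduces to tracking how the $L^1$ and $L^2$ norms of the smoothed characteristic function depend on $\varepsilon$, which is routine but must be done carefully so that the $O(\varepsilon)$ corrections to the numerator and denominator leave a net contribution in the right direction.
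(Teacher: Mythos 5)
Your construction is exactly the one the paper intends: the paper states this proposition without proof, deferring to the period-$1$ case, and your treatment of (i), (ii), (iii), (v) and (vi) is a faithful rescaling of the proof of Proposition \ref{prop:existence-good-bumps-period1}. In particular the identity $\widehat g(n)=\widehat\gamma(n/a)^2/(a\|\gamma\|_2^2)$, obtained by extending the integral over one period to all of $\mathbb{R}$, is a clean substitute for the paper's cosine-addition computation.

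The genuine gap is in (iv), and it is not ``routine bookkeeping that must be done carefully'' --- it cannot be done at all in general. For any $\gamma$ supported in $[-\delta_1,\delta_1]$, Cauchy--Schwarz gives $\bigl(\int\gamma\bigr)^2\le 2\delta_1\|\gamma\|_2^2$, hence
\begin{equation*}
\int_{-a/2}^{a/2}g(t)\,dt=\frac{\bigl(\int\gamma\bigr)^2}{\|\gamma\|_2^2}\le 2\delta_1<\frac a2 ,
\end{equation*}
so your mollification scheme pushes $\int g$ toward $a/2$ from below but never reaches it; it ``delivers (iv)'' only when $a>2$. Worse, the obstruction is partly intrinsic to the statement: conditions (iii) and (v) already force $\int_{-a/2}^{a/2}g\le 2\delta<a$, so for $a\le 1$ no function whatsoever satisfies (iii)--(v) simultaneously and the proposition as written is false. (The same inconsistency sits in the hypotheses of Proposition \ref{prop:existence-good-bumps-period1}, where (iii) and (iv) give $\int\gamma\le 2\delta<1/2$, contradicting (v) $\int\gamma=1$.) This is a defect of the statement rather than of your strategy --- the only place the proposition is used, Proposition \ref{prop:general-condition-outside}, needs $g(0)=1$, $0\le g\le 1$, the support condition and the nonnegative Fourier coefficients, but never (iv) --- so the correct repair is to weaken (iv) to $\int g>0$ (or delete it), after which your argument closes completely. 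But as a proof of the statement as printed, the final step asserting (iv) fails for $a\le 2$, and you should flag that rather than present it as a tractable estimate.
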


This allows us to prove the following result.

\begin{proposition}\label{prop:general-condition-outside}
    Let $f: \mathbb{R}^+_{0} \longrightarrow \mathbb{R}^+$ and let $\tau$ be as in Theorem \ref{thm:equivalent-decay-conditions} such that
    \begin{enumerate}[(i)]
        \item $\displaystyle \liminf_{k\rightarrow +\infty}  \tau(ak) > 0$ ($f$ has fast decay on $a\mathbb{Z}$).
        \item $\tau$ is uniformly continuous.
    \end{enumerate}
    Then $f \notin \mathcal{F}(\mathbb{L})$.
\end{proposition}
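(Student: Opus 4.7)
The plan is to argue by contradiction, combining the periodic multiplier machinery of Proposition \ref{prop:multiplier-fourier-series-period-a} with the fact that the enlarged set $\mathbb{L}_a$ still has Hausdorff dimension zero. The idea is that uniform continuity of $\tau$ promotes the pointwise fast decay of $f$ on $a\mathbb{Z}$ to a uniform fast decay on a neighborhood of $a\mathbb{Z}$; an $a$-periodic bump supported in such a neighborhood then yields a measure on $\mathbb{L}_a$ with genuine power-type Fourier decay, which is forbidden by the trivial estimate \eqref{eq:dimF le DimH}.

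Suppose, towards a contradiction, that $f \in \mathcal{F}(\mathbb{L})$. Since $\mathbb{L} \subset \mathbb{L}_a$, every probability measure supported on $\mathbb{L}$ is also supported on $\mathbb{L}_a$, and hence $f \in \mathcal{F}(\mathbb{L}_a)$. Let $c := \liminf_{k \to +\infty} \tau(ak) > 0$; then there exists $K \in \mathbb{N}$ such that $\tau(ak) \ge c/2$ for every $k \ge K$. By the uniform continuity of $\tau$, choose $\delta \in (0, a/2)$ with $|\tau(\xi) - \tau(\eta)| < c/4$ whenever $|\xi - \eta| \le \delta$. For every $k \ge K$ and every $\xi \in [ak - \delta,\, ak + \delta]$ we then have $\tau(\xi) \ge c/4$, so
\begin{equation*}
    f(\xi) \,=\, \xi^{-\tau(\xi)} \,\le\, \xi^{-c/4}.
\end{equation*}

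Now apply Proposition \ref{prop:existence-good-bumps-period-a} to obtain an even, nonnegative, $a$-periodic function $g \in C^1(\mathbb{R})$ with $g(0) = 1$, support modulo $a$ contained in $[-\delta,\delta]$, and nonnegative Fourier coefficients; in particular $g$ satisfies all hypotheses of Proposition \ref{prop:multiplier-fourier-series-period-a}, which gives $|g|f \in \mathcal{F}(\mathbb{L}_a)$. Outside the $\delta$-neighborhood of $a\mathbb{Z}$ the product $|g(\xi)|f(\xi)$ vanishes identically, while inside we have just shown $f(\xi) \le \xi^{-c/4}$ for $\xi$ large. Therefore
\begin{equation*}
    |g(\xi)|\, f(\xi) \,\le\, \|g\|_\infty\, \xi^{-c/4}
\end{equation*}
for all sufficiently large $\xi$. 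Consequently there exists $\nu \in \mathcal{P}(\mathbb{L}_a)$ with $|\hat\nu(\xi)| \lesssim \xi^{-c/4}$, and thus $\dim_F(\mathbb{L}_a) \ge c/2 > 0$. But $\mathbb{L}_a = \mathbb{L} + \frac{1}{a}\mathbb{Z}$ is a countable union of translates of the zero-dimensional set $\mathbb{L}$, so $\dim(\mathbb{L}_a) = 0$, contradicting \eqref{eq:dimF le DimH}.

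The only delicate point is matching the modulus of continuity of $\tau$ against the period of the multiplier $g$: uniform continuity produces one $\delta > 0$ that works simultaneously for every $k \ge K$, and Proposition \ref{prop:existence-good-bumps-period-a} allows us to tailor the support of $g$ to fit inside $(-\delta,\delta)\pmod a$, so the localization step is free. The remainder of the argument is bookkeeping with the definitions of $\mathcal{F}$ and $\dim_F$ together with the countable stability of Hausdorff dimension.
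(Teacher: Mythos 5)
Your argument is correct and follows essentially the same route as the paper's own proof: assume $f\in\mathcal{F}(\mathbb{L})\subset\mathcal{F}(\mathbb{L}_a)$, use uniform continuity of $\tau$ to upgrade the decay on $a\mathbb{Z}$ to a uniform power bound $\xi^{-c/4}$ on a $\delta$-neighborhood of $a\mathbb{Z}$, multiply by the $a$-periodic bump from Proposition \ref{prop:existence-good-bumps-period-a} via Proposition \ref{prop:multiplier-fourier-series-period-a}, and contradict $\dim(\mathbb{L}_a)=0$ through \eqref{eq:dimF le DimH}. The only differences are cosmetic (your $c$ versus the paper's $\alpha_0$, and making explicit the bound $\dim_F(\mathbb{L}_a)\ge \min\{1,c/2\}$), so nothing further is needed.
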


\begin{proof}
    Suppose that $f \in \mathcal{F}(\mathbb{L}) \subset \mathcal{F}(\mathbb{L}_a)$. Let $\liminf_{k \rightarrow + \infty} \tau(ak) = \alpha_0 >0$. In virtue of the uniform continuity of $\tau$, there exists a positive number $\delta < \frac{a}{2}$ such that
    \begin{equation*}
        |\xi_1 - \xi_2 | \le \delta  \implies |\tau(\xi_1) - \tau(\xi_2)| < \frac{\alpha_0}{4}.
    \end{equation*}
In addition, let $k_0 \in \mathbb{N}$ such that $k \ge k_0 \implies \tau(ak) > \frac{\alpha_0}{2}$. In consequence, for $\xi \in [ak-\delta, ak+\delta], k \ge k_0$ we get that
\begin{equation*}
    \tau(\xi) = \tau(\xi)-\tau(ak)+\tau(ak) > -\frac{\alpha_0}{4} +\frac{\alpha_0}{2} = \frac{\alpha_0}{4}.
\end{equation*}

Let $g$ be an $a$-periodic bump function as in Proposition \ref{prop:existence-good-bumps-period-a} which satisfies that $\textnormal{supp}(g) \cap [-\frac{a}{2},\frac{a}{2}] = [-\delta,\delta]$. We therefore get that $gf \in \mathcal{F}(\mathbb{L}_a)$, but
\begin{equation*}
    gf(\xi) = \left\{ \begin{array}{lcc} 0 \ \textnormal{if} \ \xi \notin \bigcup_{k \in \mathbb{N}_0}(ak-\delta,ak+\delta)   \\ \\ 
    \xi^{-\tau(\xi)} \le \xi^{-\alpha_0 /4} \ \textnormal{if} \ \xi \in [ak-\delta, ak+\delta], k \ge k_0 .  \end{array} \right.
\end{equation*}

But this implies $\xi^{-\alpha_0/4} \in \mathcal{F}(\mathbb{L}_a) \implies \dim(\mathbb{L}_a) > 0$, which is false.
\end{proof}

\section{Acknowledgements}
This research was supported by grants: PICT 2018-3399
(ANPCyT), PICT 2019-03968 (ANPCyT) and CONICET PIP 11220210100087.

\

\bibliographystyle{amsalpha}
%\bibliography{refes}

\begin{thebibliography}{PVZZ22}

\bibitem[Blu98]{blu98}
Christian Bluhm, \emph{On a theorem of {K}aufman: {C}antor-type construction of
  linear fractal {S}alem sets}, Ark. Mat. \textbf{36} (1998), no.~2, 307--316.

\bibitem[Blu00]{blu00}
Christian Bluhm,  \emph{Liouville numbers, {R}ajchman measures, and small {C}antor
  sets}, Proc. Amer. Math. Soc. \textbf{128} (2000), no.~9, 2637--2640.

\bibitem[EK06]{ek06}
M{\'a}rton Elekes and Tam{\'a}s Keleti, \emph{Borel sets which are null or
  non-{$\sigma$}-finite for every translation invariant measure}, Adv. Math.
  \textbf{201} (2006), no.~1, 102--115.

\bibitem[FH23]{FraHam}
Robert Fraser and Kyle Hambrook, \emph{Explicit {Salem} sets in
  {{\(\mathbb{R}^n\)}}}, Adv. Math. \textbf{416} (2023), 23 (English), Id/No
  108901.

\bibitem[FHR]{FHR-2024}
Robert Fraser, Kyle Hambrook, and Donggeun Ryou, \emph{Fourier restriction and
  well-approximable numbers}, \url{https://arxiv.org/abs/2311.09463}.

\bibitem[FW]{FW-2024}
Robert Fraser and Reuben Wheeler, \emph{Fourier dimension estimates for sets of
  exact approximation order: the badly-approximable case},
  \url{https://arxiv.org/abs/2309.05851}.

\bibitem[KS63]{ks63}
Jean-Pierre Kahane and Rapha{\"e}l Salem, \emph{Ensembles parfaits et s\'eries
  trigonom\'etriques}, Actualit\'es Sci. Indust., No. 1301, Hermann, Paris,
  1963.

\bibitem[LS]{Li-Sahlsten}
Jialun Li and Tuomas Sahlsten, \emph{Trigonometric series and self-similar
  sets}, Preprint, {\url{https://arxiv.org/abs/1902.00426}}.

\bibitem[Lyo95]{lyo95}
Russell Lyons, \emph{Seventy years of {R}ajchman measures}, Proceedings of the
  {C}onference in {H}onor of {J}ean-{P}ierre {K}ahane ({O}rsay, 1993), no.
  Special Issue, 1995, pp.~363--377. \MR{1364897 (97b:42019)}

\bibitem[Moc00]{moc00}
Gerd Mockenhaupt, \emph{Salem sets and restriction properties of {F}ourier
  transforms}, Geom. Funct. Anal. \textbf{10} (2000), no.~6, 1579--1587.

\bibitem[Ned20]{Neder}
L.~Neder, \emph{{\"U}ber die {Fourierkoeffizienten} der {Funktionen} von
  beschr{\"a}nkter {Schwankung}.}, Math. Z. \textbf{6} (1920), 270--273
  (German).

\bibitem[Ols05]{ols05}
L.~Olsen, \emph{On the exact {H}ausdorff dimension of the set of {L}iouville
  numbers}, Manuscripta Math. \textbf{116} (2005), no.~2, 157--172.

\bibitem[OR06]{or06}
L.~Olsen and Dave~L. Renfro, \emph{On the exact {H}ausdorff dimension of the
  set of {L}iouville numbers. {II}}, Manuscripta Math. \textbf{119} (2006),
  no.~2, 217--224.

\bibitem[PVZZ22]{PVZZ}
Andrew~D. Pollington, Sanju Velani, Agamemnon Zafeiropoulos, and Evgeniy Zorin,
  \emph{Inhomogeneous {Diophantine} approximation on {{\(M_0\)}}-sets with
  restricted denominators}, Int. Math. Res. Not. \textbf{2022} (2022), no.~11,
  8571--8643 (English).

\bibitem[Sch17]{schi17}
Ren{\'e}~L. Schilling, \emph{Measures, integrals and martingales}, 2nd edition
  ed., Cambridge: Cambridge University Press, 2017 (English).

\bibitem[VY]{VY}
P\'eter~P. Varj\'u and Han Yu, \emph{Fourier decay of self-similar measures and
  self-similar sets of uniqueness}, Preprint,
  {\url{https://arxiv.org/abs/2004.09358}}.

\end{thebibliography}

\providecommand{\bysame}{\leavevmode\hbox to3em{\hrulefill}\thinspace}
\providecommand{\MR}{\relax\ifhmode\unskip\space\fi MR }
% \MRhref is called by the amsart/book/proc definition of \MR.
\providecommand{\MRhref}[2]{%
  \href{http://www.ams.org/mathscinet-getitem?mr=#1}{#2}
}
\providecommand{\href}[2]{#2}

\end{document}